\newcommand{\mypm}{\mathbin{\smash{%
\raisebox{0.35ex}{%
            $\underset{\raisebox{0.5ex}{$\smash -$}}{\smash+}$%
            }%
        }%
    }%
}
\DeclareMathOperator*{\esssup}{ess\,sup}
\DeclareMathOperator*{\essinf}{ess\,inf}
\newtheorem{theorem}{Theorem}[section]
\newtheorem{proposition}[theorem]{Proposition}
\newtheorem{example}[theorem]{Example}
\newtheorem{lemma}[theorem]{Lemma}
\newtheorem{definition}[theorem]{Definition}
\theoremstyle{remark}
\newtheorem*{remark}{\rm{\textbf{Remark}}}
\numberwithin{equation}{section}
\definecolor{mygreen1}{RGB}{20, 180, 120}
\title[Integral means of solutions to the Robin boundary problem]{Integral means of solutions of the one-dimensional Poisson equation with Robin boundary conditions}
\author{Christos Papadimitriou}
\address{Aristotle University of Thessaloniki\\
         Department of Mathematics\\
         Thessaloniki, Greece, 546 35}
\email{papadimitc@math.auth.gr}
\date{December 2023}
\subjclass[2020]{34B08, 34C10}
\keywords{Poisson's equation, Robin boundary conditions, polarization, symmetrization, comparison theorems}
\begin{document}
\maketitle
\begin{abstract}
Consider the one-dimentional Poisson equation \(-u''=f\) on the interval \([-\pi,\pi]\), where \(f\) is an non-negative integrable function, with Robin boundary conditions \(-u'(-\pi)+\alpha u(-\pi)=u'(\pi)+\alpha u(\pi)=0\), where \(\alpha>0\) is a constant. In this paper we prove inequalities for the convex integral means of solutions of this problem, using the polarization of functions and its properties. We find solutions with maximal convex integral means and prove their uniqueness.
\end{abstract}
\maketitle
\section{Introduction}
Imagine a metal rod or a wire of length \(2\pi\) that is heated through by an external heat source with distribution \(f\in L^1[-\pi,\pi]\). We allow the rod to interact with its environment by submerging its ends in liquid or gas with temperature zero and letting heat to be dissipated via Newton's law of cooling, i.e. the heat flux at the end-points is proportional to their temperature. The mathematical formulation of this law is achieved by introducing Robin boundary conditions to our setting. Therefore, we are interested in the temperature function \(u_f\) of the steady-state of the rod, meaning that the temperature of the rod is independent of the time variable.
\par Putting all this into equations, we have that \(u_f\) has to satisfy the Poisson equation
\begin{equation}\label{Poisson}
-u''=f\text{ on }[-\pi,\pi]
\end{equation}
with Robin boundary conditions
\begin{equation}\label{Robin}
-u'(-\pi)+\alpha u(-\pi)=u'(\pi)+\alpha u(\pi)=0,
\end{equation}
 for some constant \(\alpha>0\) (which might depend on the material of the rod and other parameters).
 \par 
Recently, J. J. Langford and P. McDonald \cite{Langford}, studying this problem, provided an explicit formula for the temperature \(u_f\) of the solution (see Proposition \(\ref{green}\)). They were also interested in the convex integral means and the \(L^p\)-norms of \(u_f\) and how these quantities compare with the corresponding ones of \(u_{f^\#}\), where \(f^\#\) is the symmetric decreasing rearrangement of \(f\) (see Definition \ref{sdr}). To this end, they proved a comparison principle (\cite[Theorem 1.3]{Langford}) considering \(u_f\) and \(u_{f^\#}\).
Related research has been contacted considering other extremal problems in our setting. In \cite{Betsakos}, for example, D. Betsakos and A. Solynin studied the temperature gap or oscillation 
\begin{equation*}
    \operatorname{osc} (u_f)=\max_{[-\pi,\pi]}u_f(x)-\min_{[-\pi,\pi]}u_f(x)
\end{equation*}
of \(u_f\). Already in \cite{Langford}, it was observed that among all rearrangements of the heat source \(f\) (see Subsection 2.2 for more on rearrangements), this quantity is not maximized by \(u_{f^\#}\). D. Betsakos and A. Solynin \cite{Betsakos} determined the exact rearrangement of \(f\) that yields the maximal temperature gap for a special class of heat sources.
In the present paper, we investigate how the convex integral means and the \(L^p\)-norms of \(u_f\) are affected under rearrangements of the heat source \(f\) such as the s.d.r \(f^\#\) but also its polarizations \(f_H\) (see Definition \ref{polar}). Let us now present the main results of this paper.
\begin{theorem}[Polarization and convex integral means]\label{polarconvex}
Let \(\alpha>0\), \(0\leq f\in L^1[-\pi,\pi]\) and \(u_f\) be the solution to the Robin problem \((\ref{Poisson})\), \((\ref{Robin})\). Let \(b\in(-\pi,0)\cup(0,\pi)\). Let \(f_H\) be the polarization of \(f\) w.r.t. \(b\), and let \(u_{f_H}\) be the solution to the Robin problem with heat source \(f_H\). Then
\begin{equation}\label{ineq polar}
\int_{-\pi}^\pi \phi\bigl(u_f(x)\bigr)dx\leq\int_{-\pi}^\pi \phi\bigl(u_{f_H}(x)\bigr)dx
\end{equation}
for any convex and increasing function \(\phi:\mathbb{R}\to\mathbb{R}.\)\\
Furthermore, if \(\phi\) is strictly increasing, then \((\ref{ineq polar})\) holds as an equality if and only if \(f=f_H\) a.e. on \([-\pi,\pi]\).
\end{theorem}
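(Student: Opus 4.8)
\emph{Plan of proof.} Since problem \((\ref{Poisson})\)--\((\ref{Robin})\) is invariant under \(x\mapsto-x\) — this replaces \(u_f\) by \(u_{f(-\,\cdot)}\), polarization w.r.t.\ \(b\) by polarization w.r.t.\ \(-b\), and leaves \(\int_{-\pi}^{\pi}\phi(u)\) unchanged — we may assume \(b\in(0,\pi)\). Write \(\sigma x=2b-x\) for the reflection about \(b\), \(H=[2b-\pi,b]\), \(\sigma H=[b,\pi]\); then \(f_H=f\) on \([-\pi,2b-\pi]\), \(f_H=\max(f,f\circ\sigma)\) on \(H\) and \(f_H=\min(f,f\circ\sigma)\) on \(\sigma H\), so \(g:=f_H-f\) vanishes on \([-\pi,2b-\pi]\), is \(\ge0\) on \(H\), and satisfies \(g(\sigma y)=-g(y)\) for \(y\in H\). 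By Proposition \(\ref{green}\), \(u_f(x)=\int_{-\pi}^{\pi}G(x,y)f(y)\,dy\) with a continuous positive kernel of the form \(G(x,y)=\tfrac{1}{2p}\,a(\min(x,y))\,a(-\max(x,y))\), where \(a(t)=t+p\) and \(p=\pi+1/\alpha\); in particular \(a>0\) on \([-\pi,\pi]\) and \(G\) is a \emph{product of affine functions} on each of the triangles \(\{x\le y\}\), \(\{x\ge y\}\), so every estimate below is a one-line computation.

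First I would record, for \(x,y\in H\) (hence \(\sigma x,\sigma y\in\sigma H\)), the following relations: \((G1)\) \(G(x,y)\ge G(x,\sigma y)\), also valid for \(x\le2b-\pi\); \((G2)\) \(G(x,y)\ge G(\sigma x,y)\); \((G3)\) \(G(x,y)\ge G(\sigma x,\sigma y)\); and the identities \((G4)\) \(G(x,y)+G(x,\sigma y)-G(\sigma x,y)-G(\sigma x,\sigma y)=\tfrac{2b}{p}(b-x)\) and \((G5)\) \(G(x,y)+G(\sigma x,y)-G(x,\sigma y)-G(\sigma x,\sigma y)=\tfrac{2b}{p}(b-y)\). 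All follow by inserting the formula for \(G\) and using \(a(-y)-a(y-2b)=2(b-y)\ge0\), \(a(x)-a(x-2b)=2b\), \(x+y\le2b\), and \(a(x-2b)=x-2b+p\ge1/\alpha>0\) on \(H\). Splitting the integral for \(u_{f_H}-u_f\) over \([-\pi,2b-\pi]\), \(H\) and \(\sigma H\), substituting \(y\mapsto\sigma y\) on \(\sigma H\) and using \(g(\sigma y)=-g(y)\) gives
\[
u_{f_H}(x)-u_f(x)=\int_{H}\bigl[G(x,y)-G(x,\sigma y)\bigr]g(y)\,dy ;
\]
adding this to the same identity at \(\sigma x\) and using \((G5)\),
\[
\bigl(u_{f_H}(x)+u_{f_H}(\sigma x)\bigr)-\bigl(u_f(x)+u_f(\sigma x)\bigr)=\frac{2b}{p}\int_{H}(b-y)\,g(y)\,dy\ \ge\ 0 .
\]
By \((G1)\) and \(g\ge0\) on \(H\), the first identity gives \(u_{f_H}\ge u_f\) on \([-\pi,b]\).

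The remaining pointwise input is \(u_{f_H}(x)\ge u_f(\sigma x)\) for \(x\in H\). Expanding \(u_{f_H}(x)-u_f(\sigma x)\) the same way, the part over \([-\pi,2b-\pi]\) equals \(\tfrac{1}{p}(b-x)\int_{-\pi}^{2b-\pi}a(y)f(y)\,dy\ge0\), and the part over \(H\cup\sigma H\) is \(\ge0\) once one checks the scalar inequality
\[
G(x,y)\max(s,t)+G(x,\sigma y)\min(s,t)\ \ge\ G(\sigma x,y)\,s+G(\sigma x,\sigma y)\,t \qquad (x,y\in H,\ s,t\ge0),
\]
which follows by treating \(s\ge t\) and \(s<t\) separately, using \((G2)\) resp.\ \((G3)\) to make the leading coefficient \(\ge0\) and \((G4)\) to dominate the other one. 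Hence \(u_{f_H}(x)\ge\max\{u_f(x),u_f(\sigma x)\}\) for \(x\in H\). Now I would invoke the scalar lemma: \emph{if \(\max\{a,b\}\le\max\{c,d\}\) and \(a+b\le c+d\) then \(\phi(a)+\phi(b)\le\phi(c)+\phi(d)\) for every convex increasing \(\phi\)} — trivial if \(\min\{a,b\}\le\min\{c,d\}\), and otherwise obtained by writing \(a,b\) as convex combinations of \(\min\{c,d\}\) and \(\max\{c,d\}\). For \(x\in[2b-\pi,\pi]\) apply it to \(\{u_f(x),u_f(\sigma x)\}\) and \(\{u_{f_H}(x),u_{f_H}(\sigma x)\}\): the maximum condition holds because \(\max\{u_{f_H}(x),u_{f_H}(\sigma x)\}\) is at least the value of \(u_{f_H}\) at whichever of \(x,\sigma x\) lies in \(H\), which dominates \(\max\{u_f(x),u_f(\sigma x)\}\) by the previous step; the sum condition is the second displayed identity. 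Integrating \(\phi(u_f(x))+\phi(u_f(\sigma x))\le\phi(u_{f_H}(x))+\phi(u_{f_H}(\sigma x))\) over \(x\in H\) gives \(\int_{2b-\pi}^{\pi}\phi(u_f)\le\int_{2b-\pi}^{\pi}\phi(u_{f_H})\), and \(u_f\le u_{f_H}\) on \([-\pi,2b-\pi]\) gives \(\int_{-\pi}^{2b-\pi}\phi(u_f)\le\int_{-\pi}^{2b-\pi}\phi(u_{f_H})\); adding proves \((\ref{ineq polar})\). For the equality case, assume \(\phi\) strictly increasing and equality in \((\ref{ineq polar})\); then equality holds over \([-\pi,2b-\pi]\) separately, so \(u_{f_H}=u_f\) a.e.\ there (that interval being nondegenerate as \(b>0\)); evaluating the first identity at such a point \(x_0<2b-\pi\), where its kernel equals \(\tfrac{a(x_0)}{p}(b-y)\) with \(a(x_0)>0\), yields \(\int_H(b-y)g(y)\,dy=0\), and since \((b-y)g(y)\ge0\) a.e.\ on \(H\) we conclude \(g=0\) a.e.\ on \(H\), hence (via \(g\circ\sigma=-g\)) on \(\sigma H\) too, i.e.\ \(f=f_H\) a.e. (The converse is trivial: \(f=f_H\) a.e.\ forces \(u_f=u_{f_H}\).)

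The step I expect to be the main obstacle is the pointwise bound \(u_{f_H}(x)\ge u_f(\sigma x)\) on \(H\): here the absence of symmetry of \([-\pi,\pi]\) about \(b\) is felt (the kernel \(G\) is \emph{not} \(\sigma\)-symmetric), and it is precisely what forces one to compare mirror pairs of values rather than to prove the naive pointwise inequality \((u_f)^{H}\le u_{f_H}\) — the latter in fact fails on \(\sigma H\). Once the relations \((G1)\)--\((G5)\) for the explicit kernel are established, the rest is bookkeeping.
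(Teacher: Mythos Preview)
Your proof is correct and follows the same overall architecture as the paper's: establish kernel comparisons for $G$, derive from them the three pointwise relations $u_f(x)+u_f(\sigma x)\le u_{f_H}(x)+u_{f_H}(\sigma x)$, $u_f\le u_{f_H}$ on $[-\pi,b]$, and $u_f(\sigma x)\le u_{f_H}(x)$ for $x\in H$ (these are exactly Lemma~\ref{karamata uf}(a),(c),(b)), feed them into Karamata for $n=2$, then split $\int_{-\pi}^{\pi}$ at $2b-\pi$ and add.

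The implementation differs in two respects worth noting. First, you work throughout with the explicit product form $G(x,y)=\tfrac{1}{2p}a(\min)a(-\max)$ and obtain \emph{identities} (G4), (G5) rather than mere inequalities; this lets you prove all three $u$-comparisons by direct integration against $g=f_H-f$, whereas the paper proves (a) and (c) by invoking the Hardy--Littlewood polarization inequality (Proposition~\ref{integral of polar}) applied to $G(x,\cdot)+G(x',\cdot)$ and to $G(x,\cdot)$ respectively. Second, for the equality case you evaluate $u_{f_H}-u_f$ at a single point $x_0\in[-\pi,2b-\pi)$, where the kernel $G(x_0,y)-G(x_0,\sigma y)=\tfrac{a(x_0)}{p}(b-y)$ is strictly positive on $H\setminus\{b\}$, and conclude $g=0$ directly; the paper instead routes the equality analysis through the equality criterion of Proposition~\ref{integral of polar}. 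Your route is more self-contained and avoids citing rearrangement inequalities; the paper's is somewhat more conceptual and reuses machinery already set up for Theorem~\ref{max of uf}.
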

\vspace{3mm}
In particular, for \(\phi(\cdot)=|\cdot|^p,\) where \(p\geq1\), we get inequalities for the \(L^p\)-norms of \(u_f\) and \(u_{f_H}\). Letting \(p\to+\infty\) we get the inequality for their \(L^\infty\)-norms, i.e.
\begin{equation}\label{Lp polar}
||u_f||_{L^p}\leq ||u_{f_H}||_{L^p} \text{, for } 1\leq p\leq+\infty .
\end{equation}
\begin{theorem}[S.d.r. and convex integral means]\label{sdrconvex}
Let \(\alpha>0\), \(0\leq f\in L^1[-\pi,\pi]\) and \(u_f\) be the solution to the Robin problem \((\ref{Poisson})\), \((\ref{Robin})\). Let \(f^\#\) be the symmetric decreasing rearrangement of \(f\) and \(u_{f^\#}\) the solution to the Robin problem with heat source the function \(f^\#\), then 
\begin{equation}\label{ineq sdr}
\int_{-\pi}^\pi \phi\bigl(u_f(x)\bigr)dx\leq\int_{-\pi}^\pi \phi\bigl(u_{f^\#}(x)\bigr)dx
\end{equation}
for any convex and increasing function \(\phi:\mathbb{R}\to\mathbb{R}.\)\\
If \(\phi\) is strictly increasing, then \((\ref{ineq sdr})\) holds as an equality if and only if \(f=f^\#\) a.e. on \([-\pi,\pi]\).
\end{theorem}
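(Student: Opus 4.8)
The plan is to deduce Theorem~\ref{sdrconvex} from Theorem~\ref{polarconvex} by realizing the symmetric decreasing rearrangement \(f^\#\) as a limit of a sequence of iterated polarizations of \(f\), and then passing to the limit in the polarization inequality \((\ref{ineq polar})\). First I would record the stability facts that make the limiting procedure work. By the Green's representation of Proposition~\ref{green}, \(u_f(x)=\int_{-\pi}^{\pi}G(x,y)f(y)\,dy\) with \(G\) bounded and continuous, so \(\|u_g-u_h\|_\infty\le\|G\|_\infty\|g-h\|_{L^1}\); in particular the map \(L^1[-\pi,\pi]\ni f\mapsto u_f\in C[-\pi,\pi]\) is continuous and \(\|u_f\|_\infty\le\|G\|_\infty\|f\|_{L^1}\). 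Moreover polarization preserves non-negativity, the \(L^1\)-norm, and the rearrangement class, so \((f_H)^\#=f^\#\) for every admissible \(H\).

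Next I would invoke the approximation theorem for polarizations: there is a sequence of polarizations \(H_1,H_2,\dots\) (each with respect to a point in \((-\pi,0)\cup(0,\pi)\)) such that, putting \(f_0=f\) and \(f_k=(f_{k-1})_{H_k}\), one has \(f_k\to f^\#\) in \(L^1[-\pi,\pi]\). Applying Theorem~\ref{polarconvex} along this chain gives, for every \(k\),
\[
\int_{-\pi}^{\pi}\phi\bigl(u_f(x)\bigr)dx\le\int_{-\pi}^{\pi}\phi\bigl(u_{f_1}(x)\bigr)dx\le\cdots\le\int_{-\pi}^{\pi}\phi\bigl(u_{f_k}(x)\bigr)dx .
\]
Since all the \(f_k\) share the common bound \(\|f_k\|_{L^1}=\|f\|_{L^1}\), the functions \(u_{f_k}\) lie in a fixed ball of \(C[-\pi,\pi]\); on that ball the (necessarily continuous) convex function \(\phi\) is uniformly continuous, and \(u_{f_k}\to u_{f^\#}\) uniformly, whence \(\int_{-\pi}^{\pi}\phi(u_{f_k})\to\int_{-\pi}^{\pi}\phi(u_{f^\#})\). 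Passing to the limit in the displayed chain yields \((\ref{ineq sdr})\).

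For the equality statement, suppose \(\phi\) is strictly increasing, equality holds in \((\ref{ineq sdr})\), but \(f\neq f^\#\) on a set of positive measure. Then \(f\) is not its own symmetric decreasing rearrangement, so there is an admissible polarization \(H\) with \(f_H\neq f\) a.e.; by the strict case of Theorem~\ref{polarconvex}, \(\int_{-\pi}^{\pi}\phi(u_f)<\int_{-\pi}^{\pi}\phi(u_{f_H})\). On the other hand \((f_H)^\#=f^\#\), so applying the already-proved inequality \((\ref{ineq sdr})\) to \(f_H\) gives \(\int_{-\pi}^{\pi}\phi(u_{f_H})\le\int_{-\pi}^{\pi}\phi(u_{(f_H)^\#})=\int_{-\pi}^{\pi}\phi(u_{f^\#})\). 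Combining, \(\int_{-\pi}^{\pi}\phi(u_f)<\int_{-\pi}^{\pi}\phi(u_{f^\#})\), contradicting equality; hence \(f=f^\#\) a.e. The converse implication is immediate.

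The main obstacle is the approximation step: one must state precisely — and, for the reflection-symmetric geometry on \([-\pi,\pi]\) with center \(0\), justify — that iterated admissible polarizations of \(f\) converge in \(L^1\) to \(f^\#\), together with the companion fact used in the equality case, namely that any \(f\neq f^\#\) is genuinely moved by some admissible polarization. These are standard in the polarization/symmetrization literature but need to be pinned down in the one-dimensional setting used here; the remaining ingredients (continuity of \(f\mapsto u_f\) from the boundedness of the Green kernel, and uniform continuity of \(\phi\) on bounded sets) are routine.
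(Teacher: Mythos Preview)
Your proposal is correct and follows essentially the same route as the paper: approximate \(f^\#\) by iterated polarizations \(f_k\) in \(L^1\), use the Green representation to get uniform convergence \(u_{f_k}\to u_{f^\#}\), apply Theorem~\ref{polarconvex} along the chain and pass to the limit, then handle equality via a single polarization that moves \(f\) when \(f\neq f^\#\). The two ingredients you flag as obstacles are exactly what the paper supplies as Lemma~\ref{seq of polar to sdr} (citing Brock--Solynin) and Lemma~\ref{polar of non-sdr}.
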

Again, \((\ref{ineq sdr})\) implies that
\begin{equation}\label{Lp sdr}
||u_f||_{L^p}\leq ||u_{f^\#}||_{L^p}\text{, for }1\leq p\leq+\infty.
\end{equation}
 Note that the inequality \((\ref{ineq sdr})\) is the previously mentioned comparison principle of J. J. Langford and P. McDonald. They achieved this result using Baernstein's \(\star\)-function. They did not investigate the existence of conditions for when the equality is attained.
 \par
Using our method, we get the precise condition on when \((\ref{ineq sdr})\) is attained as an equality and we gain the intermediate Theorem \ref{polarconvex}, and, therefore, we can interpolate the inequality \((\ref{ineq sdr})\) as so
\[\int_{-\pi}^\pi \phi\bigl(u_f(x)\bigr)dx\leq\int_{-\pi}^\pi \phi\bigl(u_{f_H}(x)\bigr)dx\leq\int_{-\pi}^\pi \phi\bigl(u_{f^\#}(x)\bigr)dx.\]
Additionally, we obtain the inequality 
\[||u_f||_{L^\infty}\leq||u_{f^\#}||_{L^\infty},\]
not as just a consequence of Theorem \(\ref{sdrconvex}\), but also in a more straightforward way (see Theorem \ref{max of uf}), and we provide a condition on when it holds as an equality (something that cannot be deduced by the Theorem \(\ref{sdrconvex}\) itself).
\vspace{3mm}
\par The paper is structured as follows: In Section 2, we present essential properties of \(u_f\), we provide the precise definition of s.d.r., some properties of s.d.r and rearrangements in general. Section 3 is dedicated to the polarization. In Section 4, we have two Hardy-Littlewood inequalities and the previously mentioned Theorem \(\ref{max of uf}\) about the \(L^\infty\)-norms. Finally, the proofs of our main theorems are given in Section 5.
\section{Preliminaries}
\subsection{The Robin problem for the Poisson equation} For the Robin problem \((\ref{Poisson})\), \((\ref{Robin})\), the following existence and uniqueness property holds.
\begin{proposition}\rm{\cite[\text{Proposition 2.1}]{Langford}}\label{uf} 
Let \(f\in L^1[-\pi,\pi]\) and \(\alpha>0\). There exists a unique function \(u_f\in C^1[-\pi,\pi]\) that solves the Robin problem:
\begin{enumerate}
    \item[(a)] \(u'\) is absolutely continuous on \([-\pi,\pi]\),
    \item[(b)] \(-u''=f\) almost everywhere on \((-\pi,\pi),\)
    \item[(c)] \(-u'(-\pi)+\alpha u(-\pi)=u'(\pi)+\alpha u(\pi)=0.\)
\end{enumerate}
\end{proposition}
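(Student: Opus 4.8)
The plan is to convert the boundary value problem into an explicit representation formula and then read off existence and uniqueness from a $2\times 2$ linear system. First I would note that any $u$ satisfying (a) and (b) is determined, up to the two constants $A := u(-\pi)$ and $B := u'(-\pi)$, by integrating twice: since $u'$ is absolutely continuous with $u''=-f$ a.e., the fundamental theorem of calculus gives $u'(x)=B-\int_{-\pi}^{x}f(t)\,dt$, and a second integration together with Fubini's theorem (applied to the integrable function $(s,t)\mapsto f(t)$ on the triangle $\{-\pi\le t\le s\le\pi\}$) yields
\[
u(x)=A+B(x+\pi)-\int_{-\pi}^{x}(x-t)\,f(t)\,dt .
\]
Conversely, for \emph{any} constants $A,B$ this formula defines a function in $C^{1}[-\pi,\pi]$ whose derivative $u'(x)=B-\int_{-\pi}^{x}f(t)\,dt$ is a constant plus the indefinite integral of an $L^{1}$ function, hence absolutely continuous, and which satisfies $-u''=f$ a.e.\ by the Lebesgue differentiation theorem. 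Thus conditions (a) and (b) hold automatically and the whole problem reduces to choosing $A,B$ so that the Robin conditions (c) are satisfied.

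Next I would substitute the representation into (c). Since $u(-\pi)=A$ and $u'(-\pi)=B$, the condition at $-\pi$ reads $-B+\alpha A=0$. For the condition at $\pi$ I would compute $u'(\pi)=B-\int_{-\pi}^{\pi}f(t)\,dt$ and $u(\pi)=A+2\pi B-\int_{-\pi}^{\pi}(\pi-t)f(t)\,dt$, so that $u'(\pi)+\alpha u(\pi)=0$ becomes $\alpha A+(1+2\pi\alpha)B=\int_{-\pi}^{\pi}f(t)\,dt+\alpha\int_{-\pi}^{\pi}(\pi-t)f(t)\,dt$. This is a linear system in $(A,B)$ with coefficient matrix $\left(\begin{smallmatrix}\alpha&-1\\ \alpha&1+2\pi\alpha\end{smallmatrix}\right)$, whose determinant equals $\alpha(1+2\pi\alpha)+\alpha=2\alpha(1+\pi\alpha)$, which is strictly positive because $\alpha>0$. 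Hence there is exactly one pair $(A,B)$, equivalently exactly one admissible $u_{f}$, and solving the system gives the closed forms $B=\bigl(\int_{-\pi}^{\pi}f+\alpha\int_{-\pi}^{\pi}(\pi-t)f(t)\,dt\bigr)\big/\bigl(2(1+\pi\alpha)\bigr)$ and $A=B/\alpha$.

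There is no serious obstacle; the argument is essentially an exercise in the fundamental theorem of calculus for absolutely continuous functions. The points that deserve care are: (i) justifying that every solution of (a)--(b) is the indefinite integral of its derivative, so that the representation formula captures \emph{all} solutions and uniqueness — not merely existence — follows; (ii) the interchange of the order of integration in the second integration, which is legitimate since $f\in L^{1}$; and (iii) the role of the hypothesis $\alpha>0$, which is precisely what makes the $2\times2$ determinant nonzero, in contrast with the pure Neumann case $\alpha=0$, where solvability would require a compatibility condition on $f$ and uniqueness would fail.
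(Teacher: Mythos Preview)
Your argument is correct and complete. Note, however, that the paper does not supply its own proof of this proposition: it is quoted verbatim from \cite[Proposition~2.1]{Langford} and left unproved, with the companion Proposition~\ref{green} (also imported from \cite{Langford}) recording the explicit Green's function representation $u_f(x)=\int_{-\pi}^{\pi}G(x,y)f(y)\,dy$. So there is no in-paper proof to compare against.

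That said, your approach and the Green's function route are essentially two presentations of the same computation. You parametrize all solutions of (a)--(b) by the two Cauchy data $A=u(-\pi)$, $B=u'(-\pi)$, impose the Robin conditions, and invert the resulting $2\times 2$ system, whose determinant $2\alpha(1+\pi\alpha)$ is nonzero precisely because $\alpha>0$. The Green's function approach instead writes down $G(x,y)=-\tfrac12 c_\alpha xy-\tfrac12|x-y|+\tfrac{1}{2c_\alpha}$ and verifies directly that $u_f=\int G(\cdot,y)f(y)\,dy$ satisfies (a)--(c); uniqueness then follows from the observation that the difference of two solutions is an affine function $w$ with $-w'(-\pi)+\alpha w(-\pi)=w'(\pi)+\alpha w(\pi)=0$, which forces $w\equiv 0$ by the same determinant calculation. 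Your version has the advantage of deriving $G$ rather than guessing and checking it; the Green's function version has the advantage of handing you formula~(\ref{anaparastash}) immediately, which is what the rest of the paper actually uses. Your remark~(iii) about the Neumann case $\alpha=0$ is apt and explains exactly where the hypothesis $\alpha>0$ enters.
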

For \(\alpha>0\), the Green's function for the Robin problem \((\ref{Poisson})\), \((\ref{Robin})\) is
\[G(x,y)\coloneqq-\frac{1}{2}c_\alpha xy-\frac{1}{2}|x-y|+\frac{1}{2c_\alpha},\quad x,y\in[-\pi,\pi],\]
where \(\displaystyle c_a\coloneqq\dfrac{\alpha}{1+\alpha\pi}\in\Bigl(0,\frac{1}{\pi}\Bigr).\)
\begin{proposition}\label{green}\rm{\cite[\text{Proposition \rm{2.3}}]{Langford}}
 For \(\alpha>0\), the solution \(u_f\), provided by Proposition \(\ref{uf}\), can be represented as
\begin{equation}\label{anaparastash}
u_f(x)=\int_{-\pi}^\pi G(x,y)f(y)dy.
\end{equation}
\end{proposition}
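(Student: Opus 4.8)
The plan is to verify directly that the right-hand side of $(\ref{anaparastash})$ solves the Robin problem in the sense of Proposition $\ref{uf}$, and then to invoke the uniqueness asserted there. Put $v(x):=\int_{-\pi}^\pi G(x,y)f(y)\,dy$, and set $M_0=\int_{-\pi}^\pi f$, $M_1=\int_{-\pi}^\pi yf(y)\,dy$, $w(x)=\int_{-\pi}^\pi|x-y|f(y)\,dy$. The explicit form of $G$ gives
\[
v(x)=-\tfrac12 c_\alpha M_1\,x-\tfrac12 w(x)+\tfrac{M_0}{2c_\alpha},
\]
so the only term whose $x$-dependence is not affine is $w$.

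First I would record the regularity of $w$. Since $\partial_x|x-y|=\operatorname{sgn}(x-y)$ is bounded by $1$ and $f\in L^1$, differentiation under the integral sign yields $w'(x)=\int_{-\pi}^x f-\int_x^\pi f=2F(x)-M_0$ with $F(x)=\int_{-\pi}^x f$; as $F$ is absolutely continuous with $F'=f$ a.e., $w$ is $C^1$ with $w'$ absolutely continuous and $w''=2f$ a.e. Hence $v\in C^1[-\pi,\pi]$, $v'=-\tfrac12 c_\alpha M_1-\tfrac12 w'$ is absolutely continuous (this is (a)), and $v''=-\tfrac12 w''=-f$ a.e., i.e.\ $-v''=f$ a.e., which is (b).

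The only computation with any content is the Robin condition (c). Evaluating at the endpoints, $w(-\pi)=\pi M_0+M_1$, $w(\pi)=\pi M_0-M_1$, and $w'(\pm\pi)=\pm M_0$, so $v(\pm\pi)$ and $v'(\pm\pi)$ are explicit expressions affine in $(M_0,M_1)$. Substituting them into $-v'(-\pi)+\alpha v(-\pi)$ and into $v'(\pi)+\alpha v(\pi)$, the coefficient of $M_1$ vanishes because $c_\alpha(1+\alpha\pi)=\alpha$, and the coefficient of $M_0$ vanishes because $\alpha/c_\alpha=1+\alpha\pi$; both identities are just restatements of the definition $c_\alpha=\alpha/(1+\alpha\pi)$. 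Thus $v$ satisfies (a)--(c), so $v=u_f$ by Proposition $\ref{uf}$, which is $(\ref{anaparastash})$.

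I do not expect a genuine obstacle here: the argument is a direct verification, and the only place to stay careful is the endpoint bookkeeping, so that the two cancellations through $c_\alpha$ are transparent. If instead one wanted to explain where the closed form of $G$ comes from rather than take it as given, one could construct $G(x,\cdot)$ as the continuous, piecewise-linear function with a unit downward slope jump at $x=y$ and the Robin conditions at $x=\pm\pi$, check that it equals the stated expression and is symmetric in $(x,y)$, and then obtain $(\ref{anaparastash})$ for general $f\in L^1$ by superposition; but the verification route above is shorter and needs no separate derivation.
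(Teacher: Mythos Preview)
Your verification is correct: the decomposition $v(x)=-\tfrac12 c_\alpha M_1 x-\tfrac12 w(x)+\tfrac{M_0}{2c_\alpha}$ is right, the differentiation-under-the-integral step for $w$ is justified since $|\partial_x|x-y||\le 1$ and $f\in L^1$, and the endpoint computations check out (I reconfirmed both Robin conditions reduce to $c_\alpha(1+\alpha\pi)=\alpha$).

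Note, however, that the paper does not actually supply a proof of this proposition; it is quoted verbatim from \cite[Proposition~2.3]{Langford} and left unproved. Your direct verification---write the candidate solution, read off regularity and the differential equation from the structure of $w$, then check the boundary conditions via the defining identity for $c_\alpha$---is precisely the standard argument one would give, and it is self-contained. There is nothing to compare your route against in this paper.
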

The proof of the next Proposition will be omitted. It can be shown using simple Calculus and the reader is invited to verify it.
\begin{proposition}\label{idiotitesuf}
Let \(\alpha>0\), \(c_1,c_2\in\mathbb{R}\) and \(f,f_1,f_2\in L^1[-\pi,\pi]\) such that \(f\geq0\) and \(f\) is non-zero on a set of positive Lebesgue measure. Let \(G\) be the Green's function for the problem \((\ref{Poisson})\), \((\ref{Robin})\). Finally, let \(x_1,x_2\in(-\pi,\pi)\) with \(x_1<x_2\). Then:
\begin{enumerate}
    \item[(a)] \(G(x,y)=G(y,x)>0\) for all \(x,y\in[-\pi,\pi]\).
    \item[(b)] \(u_f\) is a concave function that is strictly positive and attains minimum at \(x=-\pi\) or \(x=\pi\). The function \(u_f\) does not attain maximum on either end of the interval \([-\pi,\pi]\) and, if \(x_1,x_2\) are points of maximum of \(u_f\) with \(x_1<x_2\), then \(u_f\) is constant on the interval \([x_1,x_2].\)
    \item[(c)] \(u_{c_1 f_1+c_2 f_2}=c_1 u_{f_1}+c_2 u_{f_2}.\)
\end{enumerate}
\end{proposition}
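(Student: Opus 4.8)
The plan is to establish the three parts separately, leaning on the integral representation \eqref{anaparastash} of Proposition~\ref{green}, the boundary conditions, and elementary facts about concave functions. For part (c) I would invoke linearity of the integral directly: $u_{c_1f_1+c_2f_2}(x)=\int_{-\pi}^{\pi}G(x,y)\bigl(c_1f_1(y)+c_2f_2(y)\bigr)\,dy=c_1u_{f_1}(x)+c_2u_{f_2}(x)$; equivalently, one could check that $c_1u_{f_1}+c_2u_{f_2}$ satisfies (a)--(c) of Proposition~\ref{uf} with source $c_1f_1+c_2f_2$ and appeal to uniqueness.

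For part (a), the symmetry $G(x,y)=G(y,x)$ is immediate from the defining formula, since $xy$ and $|x-y|$ are both symmetric. For positivity I would fix $y$ and view $x\mapsto G(x,y)$ as a piecewise affine function on $[-\pi,\pi]$ whose slope drops by exactly $1$ at the single kink $x=y$; hence it is concave, and so its minimum over $[-\pi,\pi]$ is attained at $x=-\pi$ or $x=\pi$. It then remains to check that $G(\pm\pi,y)>0$ for every $y\in[-\pi,\pi]$. Each of these is affine in $y$, so its minimum over $y$ is again attained at an endpoint, and a short computation yields $G(\pi,-\pi)=G(-\pi,\pi)=\tfrac{1}{2c_\alpha}(c_\alpha\pi-1)^2>0$, using $c_\alpha\neq 1/\pi$ (while the remaining corners give $G(\pi,\pi)=G(-\pi,-\pi)=\tfrac{1}{2c_\alpha}(1-c_\alpha\pi)(1+c_\alpha\pi)>0$).

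For part (b): since $u_f\in C^1[-\pi,\pi]$ with $u_f'$ absolutely continuous and $u_f''=-f\le 0$ a.e., the derivative $u_f'$ is non-increasing, so $u_f$ is concave. Strict positivity follows from part (a): because $G(x,y)>0$ for all $x,y$ while $f\ge 0$ is positive on a set of positive measure, $u_f(x)=\int_{-\pi}^{\pi}G(x,y)f(y)\,dy>0$ for every $x$. That a concave function on a closed interval attains its minimum at an endpoint is standard (interpolate between the endpoint values). To rule out a maximum at $\pm\pi$ I would use the boundary conditions together with strict positivity: $u_f'(\pi)=-\alpha u_f(\pi)<0$ and $u_f'(-\pi)=\alpha u_f(-\pi)>0$, so $u_f$ exceeds $u_f(\pi)$ just to the left of $\pi$ and exceeds $u_f(-\pi)$ just to the right of $-\pi$. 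Finally, if $x_1<x_2$ are both maximum points with common value $M$, concavity forces $u_f(x)\ge M$ on $[x_1,x_2]$, while $u_f\le M$ throughout, so $u_f\equiv M$ there.

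The only step that is not an immediate one-liner is the positivity of $G$ in part (a), so that is the main obstacle — but it amounts merely to identifying where $G$ is minimized (an endpoint in each variable) and checking that a perfect square is positive there. Everything else is a direct consequence of the representation formula, the boundary data, or basic concavity.
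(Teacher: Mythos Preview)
Your proposal is correct in every part: the symmetry and endpoint-minimum argument for positivity of $G$ in (a), the concavity/boundary-condition reasoning in (b), and the linearity in (c) all go through as you describe. The paper itself omits the proof entirely, remarking only that it follows from simple calculus and leaving verification to the reader; your write-up supplies precisely those details, so there is nothing to compare against.
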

\subsection{Symmetric decreasing rearrangement} Now, we shall give the definition of the decreasing rearrangement and subsequently the symmetric decreasing rearrangement mentioned in the introduction. See \cite{Baernstein} and \cite{rearrangement} for more details and proofs.
\begin{definition}[Decreasing and Symmetric Decreasing Rearrangements in one dimension]\label{sdr} Let \(X\) be a Lebesgue measurable subset of the real line and \(f\in L^1(X)\) such that the Lebesgue measure of the set \(\{x\in X: f(x)>t\}\) is finite for all \(\displaystyle t>\essinf_X f\). For \(\lambda(A)\) be the Lebesgue measure of a set \(A\). We define \(f^\ast:[0,\lambda(X)]\to\overline{\mathbb{R}}\) as 
\[f^\ast(t)=
\begin{dcases}
\esssup_X f, & t=0,\\
\inf\bigl\{s\in\mathbb{R}:\lambda\bigl(\{x\in X:f(x)>s\}\bigr)\leq t\bigr\}, &t\in(0,\lambda(X)),\\
\essinf_X f, &t=\lambda(X).
\end{dcases}\]
The function \(f^\ast\) is called the decreasing rearrangement of \(f\).\\
We also define \(f^\#:[-\frac{1}{2}\lambda(X),\frac{1}{2}\lambda(X)]\to\overline{\mathbb{R}}\) the symmetric decreasing rearrangement of \(f\), given by \(f^\#(t)=f^\ast(2|t|).\)
\end{definition}
Without getting into much detail, we will call two integrable functions \(f:X\to\overline{\mathbb{R}}\) and \(g:Y\to\overline{\mathbb{R}}\) such that the sets \(\{x\in X: f(x)>t\}\) and \(\{y\in Y: g(y)>s\}\) have finite Lebesgue measure for all \(\displaystyle t>\essinf_X f\) and for all \(\displaystyle s>\essinf_Y g\) respectively, {\it equidistributed (with respect to the Lebesgue measure)} or {\it rearrangements of each other} if \(\lambda(X)=\lambda(Y)\) and \(f^\ast=g^\ast\) on \([0,\lambda(X)]\) (and \(f^\#=g^\#\) on \([-\frac{1}{2}\lambda(X),\frac{1}{2}\lambda(X)]\)).\\
\par 
Let us present some of the properties of the decreasing and the symmetric decreasing rearrangements. The following proposition combines results found throughout the first chapter of \cite{Baernstein}, with emphasis on Propositions 1.7 and 1.30.
\begin{proposition}\label{idiotitessdr}
Let \(X\subseteq\mathbb{R}\) be a Lebesgue measurable set and \(f:X\to\overline{\mathbb{R}}\) be an integrable function such that \(\lambda\bigl(\{x\in X:f(x)>t\}\bigr)<\infty\) for all \(\displaystyle t>\essinf_X f\). Then:
\begin{enumerate}
    \item[(a)] \(f\), \(f^\ast\) and \(f^\#\) are all rearrangements of each other.
    \item[(b)] \(f^\ast\) is non-increasing in \([0,\lambda(X)]\), right-continuous on \([0,\lambda(X))\) and also \(f^\ast(\lambda(X))=\displaystyle\lim_{t\to\lambda(X)}f^\ast(t).\)
    \item[(c)] \(f^\#\) is lower semi-continuous and even function on \([-\frac{1}{2}\lambda(X),\frac{1}{2}\lambda(X)]\).
    \item[(d)]\(f^\#(y)\leq f^\#(x)\) for \(0\leq|x|\leq|y|\).
    \item[(e)] The limit \(\displaystyle\lim_{t\to\frac{1}{2}\lambda(X)}f^\#(t)\) is equal to \(f^\#(\frac{1}{2}\lambda(X))\) and \(f^\#(0)=f^\ast(0)\).
\end{enumerate}
\end{proposition}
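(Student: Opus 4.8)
The whole proposition is best deduced from the behaviour of the distribution function $\mu_f(s):=\lambda(\{x\in X:f(x)>s\})$, so I would start by recording its two elementary properties: $\mu_f$ is non-increasing, and it is right-continuous in $s$, because $\{f>s_n\}\uparrow\{f>s\}$ whenever $s_n\downarrow s$ and Lebesgue measure is continuous from below. The engine of the argument is then the identity
\[
f^\ast(t)>s\iff\mu_f(s)>t\qquad\text{for every }t\in[0,\lambda(X))\text{ and every }s\in\mathbb{R},
\]
together with the remark that the formula $f^\ast(t)=\inf\{s:\mu_f(s)\le t\}$ is in fact valid on all of $[0,\lambda(X))$ (it returns $\esssup f$ at $t=0$), while the value $f^\ast(\lambda(X))=\essinf f$ has to be treated by hand. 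For the identity, the forward implication is immediate from monotonicity of $\mu_f$ and the definition of the infimum, and the reverse one holds because $\{s':\mu_f(s')\le t\}$ is an up-set whose infimum cannot be an unattained $s$ with $\mu_f(s)>t$, by right-continuity of $\mu_f$. Getting this identity, and the behaviour at the two endpoints $t\in\{0,\lambda(X)\}$, exactly right is, I expect, the only place where genuine care is needed; the five assertions then follow almost formally, and it is cleanest to prove (b) before (a).

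For (b): monotonicity of $f^\ast$ on $(0,\lambda(X))$ is immediate, since enlarging $t$ enlarges the set over which the infimum is taken, and the trivial bounds $\essinf f\le f^\ast(t)\le\esssup f$ glue in the two endpoints. Right-continuity on $[0,\lambda(X))$ I would argue by contradiction: if $\lim_{t\downarrow t_0}f^\ast(t)=L<f^\ast(t_0)$, choose $s$ with $L<s<f^\ast(t_0)$; the identity gives $\mu_f(s)>t_0$, hence $f^\ast(t)>s$ for every $t$ slightly larger than $t_0$, contradicting $f^\ast(t)\to L$. For $f^\ast(\lambda(X))=\lim_{t\to\lambda(X)}f^\ast(t)$: the limit is $\ge\essinf f$ by monotonicity, while for each $\varepsilon>0$ the set $\{f<\essinf f+\varepsilon\}$ has positive measure, so $\mu_f(\essinf f+\varepsilon)<\lambda(X)$, and therefore $f^\ast(t)\le\essinf f+\varepsilon$ as soon as $t>\mu_f(\essinf f+\varepsilon)$.

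For (a): the identity shows directly that $\{t:f^\ast(t)>s\}$ is an interval of measure $\mu_f(s)$, i.e.\ $\mu_{f^\ast}=\mu_f$; and since $f^\#(t)=f^\ast(2|t|)$, the substitution $t\mapsto 2|t|$ turns $\{t:f^\#(t)>s\}$ into a symmetric interval of the same measure, so $\mu_{f^\#}=\mu_f$ as well. The decreasing rearrangement $g\mapsto g^\ast$, together with $\esssup g$ and $\essinf g$, is determined by the distribution function of $g$ and the measure of its domain; since $X$, $[0,\lambda(X)]$ and $[-\tfrac12\lambda(X),\tfrac12\lambda(X)]$ all have measure $\lambda(X)$ and all carry the distribution function $\mu_f$ (the endpoint value at $\lambda(X)$ being furnished by part (b)), we obtain $(f^\ast)^\ast=(f^\#)^\ast=f^\ast$, which is exactly the statement that $f$, $f^\ast$, $f^\#$ are mutual rearrangements.

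Parts (c), (d), (e) are corollaries of (b) obtained via the relation $f^\#(t)=f^\ast(2|t|)$: evenness is trivial, and (d) is just the monotonicity of $f^\ast$ applied to $2|x|\le 2|y|$. For (c), a non-increasing function that is right-continuous on $[0,\lambda(X))$ and satisfies $f^\ast(\lambda(X))=\lim_{t\uparrow\lambda(X)}f^\ast(t)$ is lower semi-continuous on $[0,\lambda(X)]$ (at each point its left limit is $\ge$ its value and its right limit equals its value), and precomposing with the continuous map $t\mapsto 2|t|$ preserves lower semi-continuity since the preimage of an open set is open. For (e), $f^\#(0)=f^\ast(0)$ is immediate from the definition, and $\lim_{t\to\frac12\lambda(X)}f^\#(t)=\lim_{u\uparrow\lambda(X)}f^\ast(u)=f^\ast(\lambda(X))=f^\#(\tfrac12\lambda(X))$ by (b). Throughout I tacitly assume $\lambda(X)<\infty$ (as in the application $X=[-\pi,\pi]$); the case $\lambda(X)=\infty$ requires only the evident minor adjustments near the right endpoint.
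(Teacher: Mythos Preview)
Your argument is correct and self-contained. Note, however, that the paper does not actually prove this proposition: it is stated without proof, with the preceding sentence indicating that it ``combines results found throughout the first chapter of \cite{Baernstein}, with emphasis on Propositions 1.7 and 1.30.'' So there is no paper proof to compare against beyond a literature citation.

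That said, your approach via the distribution function $\mu_f$ and the equivalence $f^\ast(t)>s\iff\mu_f(s)>t$ is exactly the standard machinery one finds in Baernstein's treatment, so in spirit you are reproducing the cited source rather than taking a genuinely different route. Your decision to establish (b) first and then read off (a), (c), (d), (e) from it is clean; the handling of the endpoint $t=\lambda(X)$ and the right-continuity argument are both done carefully. One small remark: in the step ``$\{f<\essinf f+\varepsilon\}$ has positive measure, so $\mu_f(\essinf f+\varepsilon)<\lambda(X)$,'' the implication goes through because $\{f>\essinf f+\varepsilon\}$ and $\{f<\essinf f+\varepsilon\}$ are disjoint, hence $\mu_f(\essinf f+\varepsilon)\le\lambda(X)-\lambda(\{f<\essinf f+\varepsilon\})<\lambda(X)$; it may be worth making that explicit.
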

\begin{proposition}\label{greensdr}
    Let \(\alpha>0\), \(G\) the Green's function, as described in Proposition \(\mathrm{\ref{green}}\), and let \(x_0\in[-\pi,\pi]\). Then
    \begin{equation}\label{sdr of green}
    G^\#(x_0,y)\leq G(0,y)\text{ for all }y\in[-\pi,\pi].
    \end{equation}
    Equality holds for all \(y\in[-\pi,\pi]\) if and only if equality holds for one \(y\in[-\pi,\pi]\) if and only if \(x_0=0.\)\\
    Consider a function \(0\leq f\in L^1[-\pi,\pi]\) that is not a.e. equal to 0 and let \(\lambda_f\in(0,2\pi]\) be the Lebesgue measure of the set \(\{y\in[-\pi,\pi]:f(y)\neq0\}\). Then \(f^\#\) is strictly positive on \(\bigl(-\frac{\lambda_f}{2},\frac{\lambda_f}{2}\bigr)\) and, subsequently, 
    \begin{equation}\label{sdr epi f}
    0<G^\#(x_0,y)f^\#(y)\leq G(0,y)f^\#(y)\text{ for all }y\in\biggl(-\frac{\lambda_f}{2},\frac{\lambda_f}{2}\biggr),
    \end{equation}
    with equality holding for all \(y\in\bigl(-\frac{\lambda_f}{2},\frac{\lambda_f}{2}\bigr)\) if and only if it holds for one \(y\in\bigl(-\frac{\lambda_f}{2},\frac{\lambda_f}{2}\bigr)\) if and only if \(x_0=0.\)
\end{proposition}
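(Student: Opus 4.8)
The plan is to establish $(\ref{sdr of green})$ first and deduce $(\ref{sdr epi f})$ from it. Fix $x_0\in[-\pi,\pi]$. From the formula for $G$ one checks $G(-x_0,y)=G(x_0,-y)$, so the graph of $y\mapsto G(-x_0,y)$ is the reflection of that of $y\mapsto G(x_0,y)$; these two functions therefore have the same symmetric decreasing rearrangement, and we may assume $x_0\ge 0$. For fixed $x_0$ the function $g(y):=G(x_0,y)$ is a continuous, piecewise-linear ``tent'': it increases on $[-\pi,x_0]$ with slope $\tfrac12(1-c_\alpha x_0)>0$, decreases on $[x_0,\pi]$ with slope $-\tfrac12(1+c_\alpha x_0)<0$, attains its maximum $M:=G(x_0,x_0)=\tfrac1{2c_\alpha}-\tfrac12 c_\alpha x_0^2$ at $y=x_0$, is positive throughout, and satisfies $G(x_0,-\pi)\le G(x_0,\pi)$. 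By contrast $G(0,y)=\tfrac1{2c_\alpha}-\tfrac12|y|$ is even, continuous and strictly decreasing in $|y|$, hence is its own symmetric decreasing rearrangement.

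Next I would compute $g^\#$ explicitly. Since the super-level sets $\{g>t\}$ are symmetric intervals about $x_0$ for $t\ge G(x_0,\pi)$ and intervals abutting the endpoint $\pi$ for $t<G(x_0,\pi)$, rearranging shows that $g^\#$ is even, continuous and piecewise-linear in $|y|$ with two affine pieces meeting at the height $G(x_0,\pi)$: a first piece of slope $-\tfrac12(1-c_\alpha^2x_0^2)$ starting at the maximum value $M$ at $|y|=0$, and a second, steeper piece on the remaining range. (Equivalently one can compare the distribution functions $t\mapsto\lambda(\{g>t\})$ and $t\mapsto\lambda(\{G(0,\cdot)>t\})$ directly and pass to the generalized inverses.) Comparing $g^\#(y)$ with $G(0,y)=\tfrac1{2c_\alpha}-\tfrac12|y|$ on each piece: on the first piece the difference is exactly $\tfrac12 c_\alpha x_0^2(c_\alpha|y|-1)$, which is negative once $x_0\ne 0$ because $c_\alpha|y|\le c_\alpha\pi<1$; on the second piece the difference is affine in $|y|$, so it suffices to check the two endpoints of that piece, where clearing denominators reduces everything to the single elementary inequality $c_\alpha\pi<1$. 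This yields $g^\#(y)\le G(0,y)$ for every $y\in[-\pi,\pi]$, with strict inequality at every $y\ne 0$ whenever $x_0\ne 0$; the remaining point $y=0$ is handled separately by noting $g^\#(0)=\max_{[-\pi,\pi]}g=M$, which is $<\tfrac1{2c_\alpha}=G(0,0)$ precisely when $x_0\ne 0$. The equivalences ``equality for all $y$ $\iff$ equality for one $y$ $\iff$ $x_0=0$'' are then immediate: $x_0=0$ gives equality everywhere, equality everywhere gives it at one point, and by the strictness just proved equality at a single point forces $x_0=0$.

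For the second assertion, Proposition \ref{idiotitessdr} gives that $f^\#$ is even, symmetric decreasing, and a rearrangement of $f$, so $\{f^\#>0\}$ is an interval centered at $0$ with $\lambda(\{f^\#>0\})=\lambda(\{f>0\})=\lambda_f$; hence it contains $\bigl(-\tfrac{\lambda_f}2,\tfrac{\lambda_f}2\bigr)$ and $f^\#>0$ there. Also $G(x_0,\cdot)$ is continuous and strictly positive on the compact interval $[-\pi,\pi]$ by Proposition \ref{idiotitesuf}(a), so $g^\#\ge\min_{[-\pi,\pi]}G(x_0,\cdot)>0$ everywhere. Multiplying the already proved inequality $G^\#(x_0,y)\le G(0,y)$ by $f^\#(y)>0$ gives $0<G^\#(x_0,y)f^\#(y)\le G(0,y)f^\#(y)$ on $\bigl(-\tfrac{\lambda_f}2,\tfrac{\lambda_f}2\bigr)$, and cancelling the positive factor $f^\#(y)$ transfers the equality statement verbatim from $(\ref{sdr of green})$.

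The step I expect to be the main obstacle is the bookkeeping around $g^\#$: correctly locating the breakpoint between its two affine pieces and, above all, handling the endpoints $y=0,\pm\pi$ under the lower-semicontinuous/right-continuous normalizations of the rearrangement — the ``equality at one $y$'' clause requires strictness at \emph{every} $y$, not merely that the two distribution functions differ on some nonempty set of levels.
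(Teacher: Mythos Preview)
Your proposal is correct and follows essentially the same route as the paper: both arguments explicitly identify $G^\#(x_0,\cdot)$ as an even, continuous, two-piece piecewise-linear function, compare it to the tent $G(0,\cdot)$ piece by piece, and reduce the strict inequality (for $x_0\neq 0$) to the single fact $c_\alpha\pi<1$. The only cosmetic differences are that the paper works with the endpoint values $G^\#(x_0,-\pi)$, $G^\#(x_0,y_1)$, $G^\#(x_0,0)$ at the explicit breakpoint $y_1=\tfrac{y_0-\pi}{2}$ and uses linearity between them, whereas you compute the slope $-\tfrac12(1-c_\alpha^2x_0^2)$ of the inner piece directly; and your concern about lower-semicontinuous normalizations is moot here because $G(x_0,\cdot)$ is continuous, so its rearrangement is continuous and no endpoint subtleties arise.
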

\begin{proof}
For \(x_0=0\), it is quite obvious that \(G(0,y)=-\dfrac{1}{2}|y|+\dfrac{1}{2c_\alpha}\) is its own symmetric decreasing rearrangement and that the inequalities \((\ref{sdr of green})\) and \((\ref{sdr epi f})\) hold trivially as equalities.\\
Let \(x_0\in(0,\pi]\). Using Calculus, one can verify that \(G(x_0,\cdot)\) attains maximum at \(y=x_0\) and minimum at \(y=-\pi\), and that the following inequality holds
\begin{equation}\label{polygonikh green}
0<G(x_0,-\pi)<G(0,\mypm\pi)<G(x_0,\pi)\leq G(x_0,x_0)<G(0,0),
\end{equation}
and if \(x_0\neq\pi,\) then the inequality is strict.
The function \(G(x_0,\cdot)\) is linearly increasing on the interval \([-\pi,x_0]\), and linearly decreasing on \([x_0,\pi]\) (the graph of \(G(\pi,\cdot)\) is just an increasing line). Consider
\[y_0=\dfrac{c_ax_0\pi+\pi-2x_0}{c_ax_0-1}\in(-\pi,\pi].\]
It is easy to see that \(y_0\) solves the equation  \(G(x_0,y)=G(x_0,\pi).\) 
The s.d.r. of \(G(x_0,\cdot)\) is, by definition, an even function, so it suffices to study it on \([-\pi,0]\). We have that \(G^\#(x_0,\cdot)\) is linearly increasing on the intervals \([-\pi,y_1]\) and \([y_1,0],\) with \(G^\#(x_0,-\pi)=G(x_0,-\pi),\) \(G^\#(x_0,0)=G(x_0,x_0)\) and 
\(G^\#(x_0,y_1)=G(x_0,y_0)=G(x_0,\pi)\), where \(y_1=\dfrac{y_0-\pi}{2}\in(-\pi,0]\). From the pointwise linearity of \(G^\#(x_0,\cdot)\) and inequality \((\ref{polygonikh green})\), to prove \((\ref{greensdr})\) as a strict inequality, it suffices to show that \(G^\#(x_0,y_1)<G(0,y_1)\). This last inequality is equivalent to \(c_a\pi<1\), which is true. The inequality \((\ref{sdr epi f})\) is a direct consequence of \((\ref{sdr of green}).\)
The proof is completely analogous if \(x_0\in[-\pi,0)\).
\end{proof}
\begin{figure}[htp]\centering
\begin{tabular}{@{}cc@{}}
\begin{tikzpicture}[define rgb/.code={\definecolor{mycolor}{RGB}{#1}},
                    rgb color/.style={define rgb={#1},mycolor}]
\begin{axis}
[  
    x=8mm,
    y=8mm,
    xtick={-3,-2,-1,0,1,2,3},
    xmin=-3.5,
    xmax=3.5,
    xlabel={\tiny $y$},
    extra x ticks = {0},
    axis x line=middle,
    ytick={0,1,2,3},
    tick label style={font=\tiny},
    ymin=0,
    ymax=3.5,
    axis y line=middle,
    no markers,
    samples=100,
    domain=-pi:pi,
    restrict y to domain=-pi:pi,
]
\addplot[blue] {x/6-(abs(x+(2*pi/3)))/2+pi}; 
\addplot[black] coordinates { (-pi,pi/2) (0,pi) (pi,pi/2) };
\addplot[red] coordinates { (-pi,3*pi/8) (pi/2,15*pi/16) (pi,5*pi/8) };
\addplot[mygreen1] {x/4+pi/2};
\end{axis}
\end{tikzpicture}
&\begin{tikzpicture}[define rgb/.code={\definecolor{mycolor}{RGB}{#1}},
                    rgb color/.style={define rgb={#1},mycolor}]
\begin{axis}
[  
    x=8mm,
    y=8mm,
    xtick={-3,...,3},   
    xmin=-3.5,
    xmax=3.5,
    xlabel={\tiny $y$},
    extra x ticks = {0},
    axis x line=middle,
    ytick={0,1,2,3},
    tick label style={font=\tiny},
    ymin=0,
    ymax=3.5,
    axis y line=middle,
    no markers,
    samples=100,
    domain=-pi:pi,
    restrict y to domain=-pi:pi
]
\addplot[blue] coordinates { (-pi,pi/3) (-pi/2,2*pi/3) (0,8*pi/9) (pi/2,2*pi/3) (pi,pi/3) };
\addplot[black] coordinates { (-pi,pi/2) (0,pi) (pi,pi/2) };
\addplot[red] coordinates { (-pi,3*pi/8) (-2*pi/3,5*pi/8) (0,15*pi/16) (2*pi/3,5*pi/8) (pi,3*pi/8) };
\addplot[mygreen1] coordinates { (-pi,pi/4) (0,3*pi/4) (pi,pi/4) };
\end{axis}
\end{tikzpicture}
\\ (a) & (b) \end{tabular}
\caption{(a) Graphs of \(G(x_0,y)\) for \(\alpha=1/\pi\) and various values of \(x_0\) \((x_0=\color{blue}{-2\pi/3},\color{black}{0},\color{red}{\pi/2},\color{mygreen1}{\pi}\color{black}{)}\); (b) Graphs of their symmetric decreasing rearrangements.}
\end{figure}
\section{Polarization}
\begin{definition}\label{polar} Let \(b\in(-\pi,0)\cup(0,\pi)\) and \(H\) be the open half-line of \(\mathbb{R}\setminus\{b\}\) which contains \(0\). 
Let \(f:\mathbb{R}\to\overline{\mathbb{R}}\) be a Lebesgue integrable function such that the Lebesgue measure of the set \[\{x\in\mathbb{R}:f(x)>t\}\] is finite for all \(t>\essinf f.\)
\begin{itemize}
    \item If \(b\in(0,\pi),\) we define the polarization (towards zero) of \(f\) with respect to \(b\) (or the polarization of \(f\) with respect to \(H=(-\infty,b)\)) as
    \[f_H(x)=
    \begin{dcases}
        f(x), &x\in[-\pi,2b-\pi),\\
        \max\{f(x),f(2b-x)\}, &x\in[2b-\pi,b],\\
        \min\{f(x),f(2b-x)\}, &x\in[b,\pi].
    \end{dcases}\]
    \item If \(b\in(-\pi,0),\) we define the polarization (towards zero) of \(f\) with respect to \(b\) (or the polarization of \(f\) with respect to \(H=(b,+\infty)\)) as
    \[f_H(x)=
    \begin{dcases}
        \min\{f(x),f(2b-x)\}, &x\in[-\pi,b],\\
        \max\{f(x),f(2b-x)\}, &x\in[b,2b+\pi],\\
        f(x), &x\in(2b+\pi,\pi].
    \end{dcases}\]
\end{itemize}
\end{definition}
\begin{remark}
    \(f_H\) is a rearrangement of \(f\) and thus \((f_H)^\ast=f^\ast\) and \((f_H)^\#=f^\#\).
\end{remark}
\begin{lemma}\rm{\cite[\text{Lemma \rm{1.38}}]{Langford}}\label{polar of non-sdr} Let \(0\leq f\in L^1[-\pi,\pi]\) be a function which is not a.e. equal to its symmetric decreasing rearrangement \(f^\#\). Then there exists an open half-line \(H\) that contains 0 such that \(f_H\) is not a.e. equal to \(f\).
\end{lemma}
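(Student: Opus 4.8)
The plan is to argue the contrapositive: assuming \(f_H=f\) almost everywhere for \emph{every} open half-line \(H\) containing \(0\), I will show \(f=f^\#\) a.e., which contradicts the hypothesis of the lemma. The engine is the elementary fact that polarization of a function is compatible with taking super-level sets: for every \(t\) and every such \(H\),
\[\{x: f_H(x)>t\}=\bigl(\{x:f(x)>t\}\bigr)_H,\]
where on the right \((\cdot)_H\) denotes the polarization of a \emph{set} (the set-analogue of Definition~\ref{polar}, keeping mass on the side of \(H\)). This is immediate from \(\max\{a,b\}>t\iff a>t\text{ or }b>t\) and \(\min\{a,b\}>t\iff a>t\text{ and }b>t\), applied on the three pieces in the definition of \(f_H\). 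Hence the hypothesis forces, for every \(t>0\), the set \(E_t:=\{x\in[-\pi,\pi]:f(x)>t\}\) to satisfy \((E_t)_H=E_t\) up to a null set, for every half-line \(H\ni 0\).

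The next step is to show that a measurable set \(E\subseteq[-\pi,\pi]\) with \((E)_H=E\) (mod null) for all \(H\ni 0\) is almost everywhere a symmetric interval \((-R,R)\). Unwinding Definition~\ref{polar} for \(b\in(0,\pi)\), \(H=(-\infty,b)\): writing \(\sigma(x)=2b-x\) for the reflection across \(b\), on \([-\pi,2b-\pi)\) the set is unchanged, on \([2b-\pi,b]\) it becomes \(E\cup\sigma E\), and on \([b,\pi]\) it becomes \(E\cap\sigma E\); so \((E)_H=E\) mod null is equivalent to the single condition ``for a.e. \(y\in E\cap(b,\pi)\), one has \(2b-y\in E\)'', and note \(2b-y\in(-y,y)\), so \(|2b-y|<|y|\). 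The analogous statement holds for \(b\in(-\pi,0)\). Now consider the planar set \(N=\{(b,y):0<b<y<\pi,\ y\in E,\ 2b-y\notin E\}\); by the above every \(b\)-slice of \(N\) is null, so Fubini gives that \(N\) is null, hence for a.e. \(y\in E\) almost every \(b\in(0,y)\) has \(2b-y\in E\); the linear change of variables \(z=2b-y\) turns this into: for a.e. \(y\in E\cap(0,\pi)\), almost every \(z\in(-y,y)\) lies in \(E\). Combining with the \(b<0\) version yields the clean radial property: for a.e. \(y\in E\) and a.e. \(z\) with \(|z|<|y|\), one has \(z\in E\). Finally put \(R:=\inf\{r>0: E\subseteq(-r,r)\text{ mod null}\}\), so \(E\subseteq(-R,R)\) mod null; for any \(r<R\) there is a positive-measure set of \(y\in E\) with \(|y|>r\), and picking one such \(y\) having the radial property gives \((-r,r)\subseteq E\) mod null; letting \(r\uparrow R\) yields \((-R,R)\subseteq E\) mod null, hence \(E=(-R,R)\) mod null.

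Applying this to \(E=E_t\) for every \(t>0\) produces a non-increasing function \(t\mapsto R(t)\in[0,\pi]\) with \(\{f>t\}=(-R(t),R(t))\) mod null for all \(t\). Since a measurable function is determined a.e. by its family of super-level sets, \(f\) agrees almost everywhere with the function \(g\) whose super-level sets are the \((-R(t),R(t))\); this \(g\) is even and non-increasing in \(|x|\), i.e.\ symmetric decreasing, so \(g=g^\#\) a.e., while \(g=f\) a.e.\ makes \(g\) equidistributed with \(f\), whence \(g^\#=f^\#\). Therefore \(f=g=g^\#=f^\#\) a.e., completing the contrapositive. (Alternatively, one can run the argument directly: if \(f\neq f^\#\) a.e., fix \(t\) with \(\{f>t\}\) not a.e.\ equal to the symmetric interval \(\{f^\#>t\}\), produce via an averaging/Fubini estimate over \(b\) a half-line \(H\) for which the set-polarization genuinely moves positive mass, and conclude \(f_H\neq f\).)

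The main obstacle is the null-set bookkeeping in the middle paragraph: passing from ``\((E)_H=E\) mod null for each fixed \(b\)'' (which a priori controls only one null slice per \(b\)) to the uniform two-variable statement requires the Fubini step above, and one must also keep track of the relevant sets being measurable. The remaining ingredients used at the end — that a measurable function is a.e.\ determined by its super-level sets, that a symmetric decreasing function coincides a.e.\ with its symmetric decreasing rearrangement (Proposition~\ref{idiotitessdr}), and that equidistributed integrable functions have identical rearrangements — are standard, and everything else is a routine unwinding of Definition~\ref{polar}.
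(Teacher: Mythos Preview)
The paper does not supply a proof of this lemma; it is quoted verbatim from \cite{Langford} (as ``Lemma 1.38'') and used as a black box. There is therefore nothing in the paper to compare your argument against.

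Your contrapositive argument is correct and self-contained. The two substantive steps --- the compatibility \(\{f_H>t\}=(\{f>t\})_H\), and the Fubini device that upgrades the per-\(b\) null-slice condition on \(E_t\) to the radial property ``a.e.\ \(y\in E_t\) has a.e.\ \(z\) with \(|z|<|y|\) in \(E_t\)'' --- are both sound as written; your remark that the set \(N\) is measurable (preimage of \(E\) under the affine map \((b,y)\mapsto 2b-y\)) is the only measurability check needed. The passage from ``\(\{f>t\}\) is a.e.\ a centred interval for every \(t\)'' to \(f=f^\#\) a.e.\ is slightly compressed: your function \(g\) is not quite canonically defined (its super-level sets need not be exactly \((-R(t),R(t))\) unless \(R\) is right-continuous), but the intended conclusion follows cleanly by the more direct route you implicitly use --- namely, \(\{f>t\}\) and \(\{f^\#>t\}\) are centred intervals of the same measure, hence equal mod null for every \(t\), and running over rational \(t\) gives \(f=f^\#\) a.e. With that one-line replacement the proof is complete.
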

By Proposition \ref{greensdr}, the function \(G_{x_0}\coloneqq G(x_0,\cdot)\) is its own symmetric decreasing rearrangement if and only if \(x_0=0\). So, we have the following example related to the previous Lemma.
\begin{example}\label{paradeigma}
Fix a point \(x_0\in[-\pi,0)\cup(0,\pi]\) and consider the Green's function \(G_{x_0}=G(x_0,\cdot):[-\pi,\pi]\to(0,+\infty)\) with one argument being constant. Let \(h\) be the finite end of an open half-line \(H\) such that \(0\in H\). Let \((G_{x_0})_H\) be the polarization of \(G_{x_0}\) with respect to \(H\).
\begin{itemize}
    \item If \(x_0<0\), then \((G_{x_0})_H\equiv G_{x_0}\) if and only if \(h\in(-\infty,x_0]\cup(0,+\infty)\).
    \item If \(x_0>0\), then \((G_{x_0})_H\equiv G_{x_0}\) if and only if \(h\in(-\infty,0)\cup[x_0,+\infty)\).
\end{itemize}
\end{example}
\section{Integral inequalities}
The following two Propositions are direct consequences of the Theorems 2.9 and 2.15 of \cite{Baernstein}, respectively. We apply the Theorems for \(\Psi(x,y)=xy\) and \(n=1\).
\begin{proposition}\label{integral of polar}Let \(0\leq f,g\in L^1(\mathbb{R})\). Then
\[\int_{-\infty}^\infty f(y)g(y)dy\leq\int_{-\infty}^\infty f_H(y)g_H(y)dy.\]
Equality holds if and only if the set
\[A_H=B_H\cup C_H\text{ has zero Lebesgue measure,}\]
where
\[B_H=\{y\in H:f(y)<f(\overline{y})\text{ and }g(y)>g(\overline{y})\}\]
and
\[C_H=\{y\in H:f(y)>f(\overline{y})\text{ and }g(y)<g(\overline{y})\}\]
\end{proposition}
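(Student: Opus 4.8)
The plan is to prove both assertions by the standard ``two-point'' reduction, using that polarization with respect to $H$ only reshuffles the values of a function at a point $y$ and at its mirror image $\overline{y}$. Write $\sigma$ for the reflection of $\mathbb{R}$ fixing the finite endpoint $h$ of $H$, so that $\overline{y}=\sigma(y)$; then $\sigma$ is measure preserving and involutive, and it carries $H$ bijectively onto $(\mathbb{R}\setminus H)\setminus\{h\}$. Hence for every nonnegative measurable $F$ on $\mathbb{R}$ one has $\int_{\mathbb{R}}F=\int_{H}\bigl(F(y)+F(\overline{y})\bigr)\,dy$. I would apply this with $F=fg$ and with $F=f_{H}g_{H}$, which reduces the Proposition to the pointwise inequality
\[
f(y)g(y)+f(\overline{y})g(\overline{y})\ \le\ f_{H}(y)g_{H}(y)+f_{H}(\overline{y})g_{H}(\overline{y})\qquad (y\in H,\ y\neq h),
\]
together with a description of the set of $y\in H$ at which it is strict.

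For the pointwise step, recall that for $y\in H$ one has $\overline{y}\notin H$, so the definition of polarization gives $f_{H}(y)=\max\{f(y),f(\overline{y})\}$ and $f_{H}(\overline{y})=\min\{f(y),f(\overline{y})\}$, and similarly for $g$ (on $[-\pi,\pi]$ this is exactly the content of Definition~\ref{polar}). Substituting, the displayed inequality becomes the elementary rearrangement fact that for nonnegative reals $a_{1},a_{2},b_{1},b_{2}$,
\[
a_{1}b_{1}+a_{2}b_{2}\ \le\ \max\{a_{1},a_{2}\}\max\{b_{1},b_{2}\}+\min\{a_{1},a_{2}\}\min\{b_{1},b_{2}\},
\]
which is verified by taking cases on the orders of $a_{1},a_{2}$ and of $b_{1},b_{2}$: if $(a_{1}-a_{2})$ and $(b_{1}-b_{2})$ have the same sign, or one of them vanishes, both sides coincide, while otherwise the difference of the right and left sides equals $|a_{1}-a_{2}|\,|b_{1}-b_{2}|>0$. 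Integrating over $H$ gives the inequality of the Proposition.

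The equality case then comes for free from the same case analysis: the nonnegative integrand $f_{H}(y)g_{H}(y)+f_{H}(\overline{y})g_{H}(\overline{y})-f(y)g(y)-f(\overline{y})g(\overline{y})$ is strictly positive at $y\in H$ precisely when $f(y)-f(\overline{y})$ and $g(y)-g(\overline{y})$ are both nonzero and of opposite sign, i.e. precisely when $y\in B_{H}\cup C_{H}=A_{H}$, and is $0$ otherwise. Therefore its integral over $H$ vanishes if and only if $\lambda(A_{H})=0$; and (when the two integrals are finite, e.g. if one of $f,g$ is bounded, as in the applications) this integral equals $\int_{\mathbb{R}}f_{H}g_{H}-\int_{\mathbb{R}}fg$, which yields the stated equivalence.

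I do not expect a genuine obstacle: the mathematical content is the one-line inequality for $a_{1}b_{1}+a_{2}b_{2}$, and everything else is bookkeeping — reconciling the orientation convention of Definition~\ref{polar} (``the open half-line containing $0$'') with the ``$\max$ on $H$'' form used above, discarding the null endpoint $h$, and justifying the symmetric splitting of the integrals; the one point needing care is the finiteness remark just made. Alternatively, and more economically, the whole statement is the specialization $\Psi(x,y)=xy$, $n=1$ of \cite[Theorems~2.9 and~2.15]{Baernstein}: since $xy$ is supermodular, those theorems apply verbatim and give both the inequality and the equality condition, so one may simply check their hypotheses and quote them.
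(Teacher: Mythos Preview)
Your argument is correct. The paper itself does not give a proof at all: it states that the Proposition is a direct consequence of \cite[Theorems~2.9 and~2.15]{Baernstein} with $\Psi(x,y)=xy$ and $n=1$, which is exactly the alternative you mention at the end. Your two-point reduction is the standard self-contained route and in fact supplies more than the paper does; the only place to be slightly careful, as you note, is the finiteness of $\int fg$, but since the integrand $f_H g_H - fg$ pulls back to a \emph{nonnegative} function on $H$ via your splitting, the inequality and the equality criterion are well-posed in $[0,\infty]$ regardless, and in every application in the paper one factor is bounded anyway.
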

\begin{proposition}\label{integral of sdr}Let \(0\leq f,g\in L^1(\mathbb{R})\). Then
\[\int_{-\infty}^\infty f(y)g(y)dy\leq\int_{-\infty}^\infty f^\#(y)g^\#(y)dy.\]
Equality holds if and only if the set
\[A=\bigl\{(x,y)\in\mathbb{R}^2:f(x)<f(y)\text{ and }g(x)>g(y)\bigr\}\]
has zero two-dimensional Lebesgue measure.
\end{proposition}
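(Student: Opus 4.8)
As the preceding remark indicates, Proposition~\ref{integral of sdr} is a direct consequence of \cite[Theorem~2.15]{Baernstein} applied with $\Psi(x,y)=xy$ and $n=1$; I will nonetheless sketch the self-contained argument, since the equality clause is what gets used below. The starting point is the layer-cake identity: for $0\le h\in L^1(\mathbb R)$ one has $h(x)=\int_0^\infty\mathbf 1_{\{h>s\}}(x)\,ds$, so Tonelli gives
\[
\int_{\mathbb R}fg=\int_0^\infty\!\!\int_0^\infty\lambda\bigl(\{f>s\}\cap\{g>t\}\bigr)\,ds\,dt ,
\]
and the same identity with $f^\#,g^\#$ in place of $f,g$. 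Now $\{f>s\}$ and $\{f^\#>s\}$ are rearrangements of one another (Proposition~\ref{idiotitessdr}(a)), hence have equal measure, which is finite for $s>0$; the same holds for $g$. Moreover $\{f^\#>s\}$ and $\{g^\#>t\}$ are intervals centred at the origin, so one contains the other, whence $\lambda(\{f^\#>s\}\cap\{g^\#>t\})=\min(\lambda\{f^\#>s\},\lambda\{g^\#>t\})=\min(\lambda\{f>s\},\lambda\{g>t\})$. Since $\lambda(S\cap T)\le\min(\lambda S,\lambda T)$ for any measurable $S,T$, comparing the two double integrals gives $\int fg\le\int f^\#g^\#$.

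For the equality clause, put $\Delta(s,t):=\min(\lambda\{f>s\},\lambda\{g>t\})-\lambda(\{f>s\}\cap\{g>t\})\ge 0$ for $s,t>0$. Writing $a:=\lambda(\{f>s\}\cap\{g>t\})$, $p:=\lambda(\{f>s\}\cap\{g\le t\})$ and $q:=\lambda(\{f\le s\}\cap\{g>t\})$, the decompositions $\lambda\{f>s\}=a+p$ and $\lambda\{g>t\}=a+q$ give $\Delta(s,t)=\min(a+p,a+q)-a=\min(p,q)$; thus $\Delta(s,t)>0$ precisely when $p>0$ \emph{and} $q>0$. Call such a pair $(s,t)$ \emph{bad}, and let $E\subseteq(0,\infty)^2$ be the set of bad pairs. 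The layer-cake computation shows that equality in Proposition~\ref{integral of sdr} is equivalent to $\Delta=0$ almost everywhere, i.e.\ to $\lambda_2(E)=0$ (with $\lambda_2$ the planar Lebesgue measure). It therefore remains to prove
\[
\lambda_2(A)=0\iff E=\varnothing\iff\lambda_2(E)=0 .
\]

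Only the implication $\lambda_2(E)=0\Rightarrow E=\varnothing$ requires an argument. The set function $\varphi(s,t):=\lambda(\{f>s\}\cap\{g\le t\})$ is non-decreasing in $t$ and right-continuous in $s$ (continuity of Lebesgue measure along the increasing union $\{f>s\}=\bigcup_{s'>s}\{f>s'\}$); symmetrically, $\psi(s,t):=\lambda(\{f\le s\}\cap\{g>t\})$ is non-decreasing in $s$ and right-continuous in $t$. Hence, if some $(s_0,t_0)$ is bad — so $\varphi(s_0,t_0)>0$ and $\psi(s_0,t_0)>0$ — then $\varphi>0$ on $[s_0,s_0+\delta)\times[t_0,\infty)$ and $\psi>0$ on $[s_0,\infty)\times[t_0,t_0+\varepsilon)$ for suitable $\delta,\varepsilon>0$, so their overlap is a positive-measure subset of $E$. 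Finally, $E=\varnothing\iff\lambda_2(A)=0$: if $(s,t)\in E$, then every $(x,y)$ in the positive-measure product $(\{f\le s\}\cap\{g>t\})\times(\{f>s\}\cap\{g\le t\})$ satisfies $f(x)\le s<f(y)$ and $g(x)>t\ge g(y)$, hence lies in $A$; conversely, given $(x,y)\in A$, any rationals $s\in(f(x),f(y))$ and $t\in(g(y),g(x))$ place $(x,y)$ in such a product, so $\lambda_2(A)>0$ forces one of the countably many rational products to have both factors of positive measure, i.e.\ a bad pair to exist.

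The main obstacle is precisely the middle equivalence $E=\varnothing\Leftrightarrow\lambda_2(E)=0$: upgrading the fact that $\Delta=0$ almost everywhere (which drops out of the layer-cake integral) to the combinatorial statement that there is \emph{no} bad level pair — equivalently, that the super-level sets of $f$ and those of $g$ form a chain — needs the monotonicity-plus-right-continuity bookkeeping for $\varphi$ and $\psi$ above. Everything else is routine: Tonelli, the bound $\lambda(S\cap T)\le\min(\lambda S,\lambda T)$, and the nestedness of symmetric decreasing super-level sets. (One caveat: the equality clause tacitly assumes $\int f^\#g^\#<\infty$, so that $\int f^\#g^\#-\int fg$ is a legitimate subtraction; this is automatic in every application we make of the Proposition, where one of the two functions is a bounded Green's function on a bounded interval.)
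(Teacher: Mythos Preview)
Your proposal is correct. The paper itself gives no proof beyond the sentence you echo at the outset --- that the proposition follows from \cite[Theorem~2.15]{Baernstein} with $\Psi(x,y)=xy$ and $n=1$ --- so your self-contained layer-cake argument, together with the careful handling of the equality clause (the right-continuity and monotonicity bookkeeping for $\varphi$ and $\psi$ that upgrades $\lambda_2(E)=0$ to $E=\varnothing$, and the countable rational covering linking $E$ to $A$), goes well beyond what the paper supplies and is sound; the finiteness caveat you flag is also appropriate.
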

\begin{remark} We would like to use the previous two theorems for \(0\leq f\in L^1[-\pi,\pi]\) and \(g=G_{x_0}\) (as described in Example \ref{paradeigma}), which are defined on the interval \([-\pi,\pi]\). In order to be rigorous, we have to extend them on the whole real line as \(\widehat{f}\) such that \(\widehat{f}=f\) on \([-\pi,\pi]\) and \(\widehat{f}=0\) elsewhere, and similarly for \(g\). That being said, we won't bother with the ``\(\widehat{\phantom{x}}\)'' notation ever again.
\end{remark}
\begin{theorem}\label{max of uf}Let \(\alpha>0\), \(0\leq f\in L^1[-\pi,\pi]\) and \(u_f\) be the solution to the corresponding Robin problem. Let \(f^\#\) be the symmetric decreasing rearrangement of \(f\) and \(u_{f^\#}\) the solution to the Robin problem with heat source the function \(f^\#\). Then 
\begin{equation}\label{ineq max}
||u_f||_{L^\infty}\leq||u_{f^\#}||_{L^\infty}=u_{f^\#}(0).
\end{equation}
Equality if and only if \(f=f^\#\) a.e..
\end{theorem}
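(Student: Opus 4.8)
The plan is to reduce the $L^\infty$ statement to a pointwise comparison of Green's functions through the Hardy--Littlewood inequality of Proposition \ref{integral of sdr}, and then to read off the equality case from Proposition \ref{greensdr}, Lemma \ref{polar of non-sdr} and Proposition \ref{integral of polar}. If $f=0$ a.e.\ everything is trivial (both sides of $(\ref{ineq max})$ vanish and $f=f^\#$ a.e.), so assume $\lambda_f\coloneqq\lambda(\{f\neq0\})>0$. First I would locate the two maxima. By Proposition \ref{idiotitesuf}(b), $u_f$ is concave, strictly positive and does not attain its maximum at $\pm\pi$, so $\|u_f\|_{L^\infty}=u_f(x_0)$ for some $x_0\in(-\pi,\pi)$. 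Since $f^\#$ is even and $G(-x,-y)=G(x,y)$ (immediate from the explicit formula), the substitution $y\mapsto -y$ in $(\ref{anaparastash})$ shows that $u_{f^\#}$ is even; being also concave it attains its maximum at $0$, which already proves $\|u_{f^\#}\|_{L^\infty}=u_{f^\#}(0)$ and isolates the quantity to be bounded.

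Next comes the inequality itself. Using $(\ref{anaparastash})$, I would apply Proposition \ref{integral of sdr} to the pair $f$ and $G_{x_0}\coloneqq G(x_0,\cdot)$, and then the pointwise bound $(\ref{sdr of green})$ of Proposition \ref{greensdr}, to obtain
\[
u_f(x_0)=\int_{-\pi}^{\pi}G(x_0,y)f(y)\,dy\le\int_{-\pi}^{\pi}G^\#(x_0,y)f^\#(y)\,dy\le\int_{-\pi}^{\pi}G(0,y)f^\#(y)\,dy=u_{f^\#}(0),
\]
which is exactly $(\ref{ineq max})$ (together with the formula for $\|u_{f^\#}\|_{L^\infty}$ already established).

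For the equality case, the ``if'' direction is immediate: if $f=f^\#$ a.e.\ then $u_f\equiv u_{f^\#}$ by $(\ref{anaparastash})$. For ``only if'', assume equality holds; then both inequalities in the display above are equalities. The second one reads $\int_{-\pi}^{\pi}\bigl(G(0,y)-G^\#(x_0,y)\bigr)f^\#(y)\,dy=0$ with a nonnegative integrand, so $G^\#(x_0,y)f^\#(y)=G(0,y)f^\#(y)$ for a.e.\ $y$; since $f^\#>0$ on $(-\lambda_f/2,\lambda_f/2)$, the strict form of $(\ref{sdr epi f})$ in Proposition \ref{greensdr} forces $x_0=0$. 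Hence $(G_0)^\#=G_0$ and the first inequality degenerates to $\int_{-\pi}^{\pi}G_0\,f=\int_{-\pi}^{\pi}G_0\,f^\#$.

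The last step, deducing $f=f^\#$ a.e.\ from $\int G_0 f=\int G_0 f^\#$, is the main obstacle, and I would handle it by contradiction using polarization rather than a direct measure-theoretic argument. If $f\neq f^\#$ on a set of positive measure, Lemma \ref{polar of non-sdr} provides an open half-line $H\ni 0$ with $f_H\neq f$ on a set of positive measure. Since $G_0(y)=-\tfrac12|y|+\tfrac1{2c_\alpha}$ is strictly decreasing in $|y|$ and $|y|\le|2b-y|$ for every $y$ in the reflection region of $H$, polarization towards zero leaves $G_0$ unchanged, $(G_0)_H=G_0$ (the $x_0=0$ analogue of Example \ref{paradeigma}); by the same monotonicity the set $C_H$ of Proposition \ref{integral of polar} is null, while $B_H$ agrees, up to a null set, with $\{y\in H:f(y)<f(2b-y)\}$, which has positive measure precisely because $f_H\neq f$. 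Proposition \ref{integral of polar} then yields the strict inequality
\[
\int_{-\pi}^{\pi}G_0\,f<\int_{-\pi}^{\pi}(G_0)_H\,f_H=\int_{-\pi}^{\pi}G_0\,f_H\le\int_{-\pi}^{\pi}(G_0)^\#\,(f_H)^\#=\int_{-\pi}^{\pi}G_0\,f^\#,
\]
where the last inequality is Proposition \ref{integral of sdr} together with $(f_H)^\#=f^\#$. This contradicts the equality just obtained, so $f=f^\#$ a.e. The delicate points here are verifying $(G_0)_H=G_0$ and the null-set bookkeeping for $B_H$ and $C_H$; the rest is a direct assembly of results already available.
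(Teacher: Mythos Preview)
Your proof is correct and follows essentially the same route as the paper's: Hardy--Littlewood (Proposition \ref{integral of sdr}) plus Proposition \ref{greensdr} for the inequality, then Proposition \ref{greensdr} to force \(x_0=0\) in the equality case, then polarization via Lemma \ref{polar of non-sdr} and Proposition \ref{integral of polar} to conclude \(f=f^\#\). The only cosmetic difference is that the paper threads the polarization step through the main chain from the outset (writing \(\int G_{x_0}f\le\int(G_{x_0})_H f_H\le\int G_{x_0}^{\#} f^{\#}\le\int G_0 f^{\#}\)), while you go directly \(\int G_{x_0}f\le\int G_{x_0}^{\#} f^{\#}\) and introduce polarization only when analyzing equality; your organization is slightly leaner but the ideas are identical.
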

\begin{proof}
Let \(x_0\) be the point of maximum of \(u_f\) that has the least absolute value amongst all points of maximum of \(u_f\). We extend the positive continuous function \(G_{x_0}\coloneqq G(x_0,\cdot):[-\pi,\pi]\to\mathbb{R}\) and the non-negative function \(f\) to be identically zero on the exterior of \([-\pi,\pi]\). The desired inequality is derived as follows: 
\begin{align*}
    ||u_f||_\infty&=u_f(x_0)=\int_{-\pi}^\pi G(x_0,y)f(y)dy=\int_{-\infty}^\infty G_{x_0}(y)f(y)dy\\
    &\overset{(1)}{\leq}\int_{-\infty}^\infty (G_{x_0})_H(y)f_H(y)dy\overset{(2)}{\leq}
    \int_{-\infty}^\infty \bigl((G_{x_0})_H\bigr)^\#(y)(f_H)^\#(y)dy\\
    &=\int_{-\infty}^\infty (G_{x_0})^\#(y)f^\#(y)dy=\int_{-\pi}^\pi (G_{x_0})^\#(y)f^\#(y)dy\\
    &\overset{(3)}{\leq}
    \int_{-\pi}^\pi G(0,y)f^\#(y)dy=u_{f^\#}(0)=||u_{f^\#}||_\infty,
\end{align*}
where \(H\) is any open half-line in \(\mathbb{R}\). The inequalities (1), (2) and (3) are justified by Propositions \ref{integral of polar}, \ref{integral of sdr} and \ref{greensdr} respectively.\\
Now, quite trivially, if \(f=f^\#\) almost everywhere, then \(u_f\equiv u_{f^\#}\) and thus \(||u_f||_\infty=||u_{f^\#}||_\infty\).\\
Conversely, assume that \(f\) is not almost everywhere equal to \(f^\#\). We will show that \((\ref{ineq max})\) cannot hold as an equality. In order for \((\ref{ineq max})\) to hold as an equality, all three (1), (2) and (3) must hold as equalities, too. In particular, it suffices to concentrate first on (3) and then on (1).\\
If \(x_0\neq0\), then, by Proposition \ref{greensdr}, (3) holds as a strict inequality. So, let us assume that \(x_0=0\), but still \(f\) is not almost everywhere equal to its symmetric decreasing rearrangement.
By Lemma \ref{polar of non-sdr}, there exists an open half-line \(H\) that contains 0 such that \(f_H\) is not a.e. equal to \(f\). Let us compute the set \(A_H\) mentioned
in Proposition \ref{integral of polar} for \(g(y)=G(0,y)\). Since the function \(G(0,y)=-\dfrac{1}{2}|y|+\dfrac{1}{2c_\alpha}\) is its own symmetric decreasing rearrangement, it is easy to see that \(G(0,y)>G(0,\overline{y})\) for all \(y\in H\). Thus
\begin{align*}
B_H&=\{y\in H:f(y)<f(\overline{y})\text{ and }G(0,y)>G(0,\overline{y})\}\\
&=\{y\in H:f(y)<f(\overline{y})\}
\end{align*}
and
\[C_H=\{y\in H:f(y)>f(\overline{y})\text{ and }G(0,y)<G(0,\overline{y})\}
=\varnothing,\]
so \(A_H=\{y\in H:f(y)<f(\overline{y})\}\).
So, the condition that \(\lambda(A_H)=0\) turns out to be equivalent to \(f(y)\geq f(\overline{y})\) almost everywhere on \(H\) which, in turn, is equivalent to \(f\) being equal to \(f_H\) almost everywhere on \(H.\) But, as mentioned above, \(f\) is not a.e. equal to \(f_H\), thus \(\lambda(A_H)>0\) and, by Proposition \ref{integral of polar}, (1), and subsequently \((\ref{ineq max})\), holds as a strict inequality.
\end{proof}
\section{Proofs of main Theorems}
The following Lemma incorporates Theorem 1 and Note 2 of \cite{Karamata}.
\begin{lemma}[Karamata's inequality]\label{karamata}
Let \(\phi:(a,b)\to\mathbb{R}\) be a convex and increasing function. Let \(\{x_i\}_{i=1}^n\) and \(\{y_i\}_{i=1}^n\) be two n-tuples of real numbers contained in the open interval \((a,b)\) such that
\begin{enumerate}
    \item[(a)] \(x_1\geq x_2\geq\dots\geq x_n\) and \(y_1\geq y_2\geq\dots\geq y_n\),
    \item[(b)] \(x_1\leq y_1\),
    \item[(c)] \(x_1+x_2+\dots+x_k\leq y_1+y_2+\dots+y_k\) for all \(k\in\{2,\dots,n\}\).
\end{enumerate}
Under these assumptions, the following inequality holds
\begin{equation}\label{karamata ineq}
\phi(x_1)+\phi(x_2)+\dots+\phi(x_n)\leq\phi(y_1)+\phi(y_2)+\dots+\phi(y_n).
\end{equation}
If the function \(\phi\) is strictly convex, then \((\ref{karamata ineq})\) holds as an equality if and only if \(x_i=y_i\) for all \(i=1,2,\dots,n.\)
\end{lemma}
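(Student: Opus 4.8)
The plan is to prove (\ref{karamata ineq}) by a discrete integration-by-parts (Abel summation), the same scheme that proves the classical majorization inequality, with the monotonicity of $\phi$ supplying control of the boundary term. Since a convex function on an open interval is continuous and has a right derivative $\psi\coloneqq\phi'_+$ at every point, and $\psi$ is non-decreasing, I would first linearize the differences: for each $i$ let $c_i$ be the mean value of $\psi$ over the interval with endpoints $\min(x_i,y_i)$ and $\max(x_i,y_i)$ (interpreted as $\psi(x_i)$ when $x_i=y_i$), so that $\phi(x_i)-\phi(y_i)=c_i(x_i-y_i)$. Two elementary facts about these slopes will be used: $c_i\ge 0$ for all $i$, because $\phi$ is increasing and hence $\psi\ge 0$; and $c_1\ge c_2\ge\cdots\ge c_n$, because $\psi$ is non-decreasing, the average of a non-decreasing function over $[a,b]$ is monotone non-decreasing in each of $a$ and $b$, and by hypothesis (a) both endpoints $\min(x_i,y_i)$ and $\max(x_i,y_i)$ are non-increasing in $i$.

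Next I would set the partial-sum defects $D_0\coloneqq 0$ and $D_k\coloneqq\sum_{i=1}^k(x_i-y_i)$, so that hypotheses (b) and (c) say precisely that $D_1,\dots,D_n\le 0$. Abel summation then yields
\[
\sum_{i=1}^n\phi(x_i)-\sum_{i=1}^n\phi(y_i)=\sum_{i=1}^n c_i(D_i-D_{i-1})=c_nD_n+\sum_{i=1}^{n-1}(c_i-c_{i+1})D_i .
\]
On the right-hand side $c_nD_n\le 0$ since $c_n\ge 0$ and $D_n\le 0$, and each summand $(c_i-c_{i+1})D_i\le 0$ since $c_i-c_{i+1}\ge 0$ and $D_i\le 0$; hence the whole expression is $\le 0$, which is (\ref{karamata ineq}). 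Note that the term $c_nD_n$ is exactly what vanishes in the classical setting $\sum x_i=\sum y_i$, and that controlling it is the sole use of the hypothesis that $\phi$ is increasing.

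For the equality clause, assume $\phi$ strictly convex, so $\psi$ is strictly increasing; since $\phi$ is also non-decreasing on an open interval, $\psi$ cannot vanish (a zero of $\psi$ would force $\psi<0$ immediately to its left), and therefore $c_i>0$ for every $i$. Hence equality in (\ref{karamata ineq}) forces both $D_n=0$ and $(c_i-c_{i+1})D_i=0$ for $i=1,\dots,n-1$. Strict monotonicity of $\psi$ also sharpens the second elementary fact above: $c_i=c_{i+1}$ can occur only when the two defining intervals coincide, which under (a) means $x_i=x_{i+1}$ and $y_i=y_{i+1}$. Thus for each $i<n$ either $D_i=0$ or $(x_i,y_i)=(x_{i+1},y_{i+1})$. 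A short propagation argument then closes the case: on a maximal block $p\le i\le q$ with $D_i<0$ we have $D_{p-1}=D_{q+1}=0$ and $x_p=\cdots=x_{q+1}$, $y_p=\cdots=y_{q+1}$, so $0=D_{q+1}-D_{p-1}=(q+2-p)(x_p-y_p)$ gives $x_p=y_p$ and then $D_p=D_{p-1}+(x_p-y_p)=0$, contradicting $D_p<0$; hence every $D_i=0$ and $x_i-y_i=D_i-D_{i-1}=0$. The converse implication is immediate.

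I expect the inequality itself to be routine once the linearization and sign bookkeeping are in place; the real work should be in the equality analysis — in particular, making precise the claim ``$c_i=c_{i+1}$ forces the two intervals to coincide'' in the presence of possible discontinuities of $\psi$ and degenerate intervals, and then carrying out the combinatorial propagation of $D_n=0$ back through the blocks. A secondary point to keep in mind is to phrase everything via $\phi'_+$ and one-sided difference quotients, so that no differentiability of $\phi$ is assumed.
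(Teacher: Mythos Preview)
The paper does not prove Lemma~\ref{karamata}; it merely records it as a known result, citing Theorem~1 and Note~2 of \cite{Karamata}. So there is no in-paper argument to compare against.

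Your proposal is correct and is the standard Abel-summation proof of the majorization inequality, adapted to weak majorization via the extra boundary term $c_nD_n$ (which, as you note, is exactly where the monotonicity of $\phi$ enters). The monotonicity of the secant slopes $c_i$ follows cleanly from convexity and hypothesis~(a), and your treatment of the equality case is sound: strict convexity makes $\phi'_+$ strictly increasing, which together with $\phi'_+\ge 0$ on the open interval forces $\phi'_+>0$ and hence $c_i>0$; the claim that $c_i=c_{i+1}$ forces the two secant intervals to coincide holds because, under $a'\le a$ and $b'\le b$, one can pass from $[a',b']$ to $[a,b]$ in two monotone steps, each of which strictly increases the secant slope of a strictly convex function unless the endpoint does not move. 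Your block-propagation argument then correctly rules out any index with $D_i<0$. The one place to be slightly careful in a full write-up is the degenerate case $x_i=y_i$ (where $c_i=\phi'_+(x_i)$): the monotonicity and strict-monotonicity comparisons with neighbouring non-degenerate intervals go through via $\phi'_+(a)\le\text{(secant over }[a,b])\le\phi'_-(b)\le\phi'_+(b)$, with the first and second inequalities strict for strictly convex $\phi$, so nothing new is needed beyond what you have sketched.
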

Now, we shall present some more properties of Green's functions, which, in turn, will imply some useful inequalities involving the solutions of Robin's problems.
\begin{lemma}\label{karamata green} Let \(\alpha>0\), let \(G\) be the Green's function for the problem \((\ref{Poisson})\), \((\ref{Robin})\), let \(b\in(-\pi,0)\cup(0,\pi)\). If \(b>0\) (respectively \(b<0\)), let \(I\) be the interval \([b,\pi]\) (respectively \([-\pi,b]\)). For all \(x,y\in I\), the following two inequalities hold
\begin{equation}\label{5.2}
G(x,y)+G(x',y)\leq G(x,y')+G(x',y')
\end{equation}
and
\begin{equation}\label{5.3}
G(x,y)+G(x,y')\leq G(x',y)+G(x',y'),
\end{equation}
where \(x'\) and \(y'\) are the reflections of \(x\) and \(y\) with respect to \(b\), i.e. \(x'=2b-x\) and \(y'=2b-y\).
\begin{proof}
Due to symmetry, we can consider only the case that \(b\in(-\pi,0)\). Therefore, we have \(I=[-\pi,b]\). Let \(x,y\in I\) and write down the formulas for \(G(x,y)\), \(G(x',y)\), \(G(x,y')\) and \(G(x',y')\). Then, it is obvious that \((\ref{5.2})\) is equivalent to
\begin{equation}\label{5.4}
c_\alpha xy+|x-y|+c_\alpha x'y+|x'-y|\geq c_\alpha xy'+|x-y'|+c_\alpha x'y'+|x'-y'| .
\end{equation}
Note that \(|x-y|=|x'-y'|\) and \(|x'-y|=|x-y'|\). Also, the constant \(c_a\) is positive, which ultimately means that the inequality we have to prove is equivalent to
\begin{equation}\label{5.5}
xy+x'y\geq xy'+x'y'.
\end{equation}
Substituting \(x'=2b-x\) and \(y'=2b-y\) we see that \((\ref{5.5})\) is equivalent to \(y\leq b\), which is true because \(y\in I\).
In the exact same manner, one can prove \((\ref{5.3})\).
\end{proof}
\end{lemma}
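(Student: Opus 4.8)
The plan is to work directly from the closed form \(G(x,y)=-\tfrac12 c_\alpha xy-\tfrac12|x-y|+\tfrac{1}{2c_\alpha}\) and to lean on two elementary identities for the reflection \(t\mapsto 2b-t\). First I would remove one of the two cases by symmetry: since \(G(x,y)\) depends on \(x\) and \(y\) only through \(xy\) and \(|x-y|\), we have \(G(-x,-y)=G(x,y)\); moreover the reflection about \(-b\) is conjugate, via \(t\mapsto -t\), to the reflection about \(b\), and \(t\mapsto -t\) carries \([b,\pi]\) onto \([-\pi,-b]\). Hence the assertion for \(b\in(0,\pi)\) with \(I=[b,\pi]\) follows from the assertion for \(b\in(-\pi,0)\) with \(I=[-\pi,b]\), and I would treat only the latter; so from now on \(b<0\), \(I=[-\pi,b]\), \(x'=2b-x\), \(y'=2b-y\).

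Next I would substitute the formula for \(G\) into \((\ref{5.2})\). The constant \(\tfrac{1}{2c_\alpha}\) appears twice on each side and cancels, and after multiplying both sides by \(-2\) (which reverses the inequality once) the inequality \((\ref{5.2})\) is seen to be equivalent to
\[
c_\alpha xy+|x-y|+c_\alpha x'y+|x'-y|\;\ge\;c_\alpha xy'+|x-y'|+c_\alpha x'y'+|x'-y'|.
\]
The observation that makes this work is that the absolute-value terms cancel in pairs across the inequality: since \(x'-y'=y-x\) we have \(|x-y|=|x'-y'|\), and since \(x-y'=-(x'-y)\) we have \(|x'-y|=|x-y'|\). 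What is left, after dividing by the positive constant \(c_\alpha\), is \(xy+x'y\ge xy'+x'y'\), i.e.\ \((x+x')y\ge (x+x')y'\). Since \(x+x'=2b\) this reads \(2by\ge 2by'\); as \(b<0\) it is equivalent to \(y\le y'=2b-y\), that is to \(y\le b\) — which is exactly the hypothesis \(y\in I=[-\pi,b]\).

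Inequality \((\ref{5.3})\) goes through verbatim with the roles of \(x\) and \(y\) exchanged: the same cancellations reduce it to \(x(y+y')\ge x'(y+y')\), i.e.\ to \(2bx\ge 2bx'\), equivalently \(x\le b\), again true on \(I\). The only real subtlety — and the nearest thing to an obstacle — is bookkeeping with signs: one inequality reversal comes from multiplying by \(-2\) and a second from dividing by the negative number \(b\), so one must be careful that the endpoint condition defining \(I\) (namely \(y\le b\) when \(b<0\), but \(y\ge b\) in the mirrored case \(b>0\)) lands on the correct side. There is nothing deeper to do.
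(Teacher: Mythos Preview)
Your proof is correct and follows essentially the same route as the paper's: reduce by the \(t\mapsto -t\) symmetry to \(b<0\), substitute the explicit Green's function, cancel the absolute-value terms pairwise via \(|x-y|=|x'-y'|\) and \(|x'-y|=|x-y'|\), and reduce each inequality to \(y\le b\) (resp.\ \(x\le b\)). Your additional remarks on the sign bookkeeping and the explicit justification of the symmetry reduction are accurate and only make the argument more transparent.
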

This lemma has immediate consequences on the solution of a Robin problem with given heat source \(f\) and the solution of the Robin problem with heat source any polarization \(f_H\) of \(f\).
\begin{lemma}\label{karamata uf}
Let \(\alpha>0\), \(0\leq f\in L^1[-\pi,\pi]\) and \(u_f\) be the solution to the corresponding Robin problem, let \(b\in(-\pi,0)\cup(0,\pi)\), let \(f_H\) be the polarization of \(f\) w.r.t. \(b\), and let \(u_{f_H}\) be the solution to the Robin problem with heat source \(f_H\). Let \(I\) be as defined in Lemma \(\mathrm{\ref{karamata green}}\). The following hold:
\begin{enumerate}
    \item[(a)] If \(x\in I\), let \(x'=2b-x\). Then
    \begin{equation}\label{uf syn uf}
    u_f(x)+u_f(x')\leq u_{f_H}(x)+u_{f_H}(x').
    \end{equation}
    \item[(b)] If \(x\in I\), let \(x'=2b-x\). Then
    \begin{equation}\label{uf uf polwsh polwsh}
    u_f(x)\leq u_{f_H}(x').
    \end{equation}
    \item[(c)] If \(b>0\) (respectively \(b<0\)), let \(x\in[-\pi,b]\) (respectively \(x\in [b,\pi]\)). Then 
    \begin{equation}\label{uf uf polwsh}
    u_f(x)\leq u_{f_H}(x).
    \end{equation}
    For \(b>0\) (resp. \(b<0\)), \((\ref{uf uf polwsh})\) holds as an equality for some \(x\in[-\pi,b]\) (resp. \(x\in [b,\pi]\)) if and only if it holds as an equality for all \(x\in[-\pi,b]\) (resp. \(x\in [b,\pi]\)) if and only if \(f=f_H\) almost everywhere on \([-\pi,\pi]\) if and only if \(u_f\equiv u_{f_H}\) on \([-\pi,\pi]\). 
\end{enumerate}
\end{lemma}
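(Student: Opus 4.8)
The plan is as follows. By the reflection $x\mapsto -x$ — under which $[-\pi,\pi]$ and $c_\alpha$ are unchanged, $G(-x,-y)=G(x,y)$, and polarization with respect to $b$ becomes polarization with respect to $-b$ — it suffices to treat $b\in(0,\pi)$; then $I=[b,\pi]$, $f_H=f$ on $[-\pi,2b-\pi)$, $f_H(y)=\max\{f(y),f(2b-y)\}$ on $[2b-\pi,b]$, and $f_H(y)=\min\{f(y),f(2b-y)\}$ on $[b,\pi]$. Write $z'=2b-z$. The single device used throughout is: express the relevant quantity via the representation $(\ref{anaparastash})$, split the integral over $[-\pi,2b-\pi)$, $[2b-\pi,b]$, $[b,\pi]$, and substitute $y\mapsto y'$ to fold the middle piece onto $[b,\pi]$; since $f$ and $f_H$ agree on $[-\pi,2b-\pi)$, what remains is an integral over the paired region. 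For part (a), folding $u_f(x)+u_f(x')$ and $u_{f_H}(x)+u_{f_H}(x')$ and subtracting (using $f_H(y)-f(y)=-(f(y)-f(y'))^+$ and $f_H(y')-f(y')=(f(y)-f(y'))^+$ for $y\in[b,\pi]$) leaves
\[\bigl(u_{f_H}(x)+u_{f_H}(x')\bigr)-\bigl(u_f(x)+u_f(x')\bigr)=\int_b^\pi\bigl(f(y)-f(y')\bigr)^+\Bigl[\bigl(G(x,y')+G(x',y')\bigr)-\bigl(G(x,y)+G(x',y)\bigr)\Bigr]dy,\]
whose integrand is $\ge 0$ because the bracket is non-negative by inequality $(\ref{5.2})$ of Lemma~\ref{karamata green} (valid since $x,y\in I$). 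This proves $(\ref{uf syn uf})$.

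For part (b) I would run the same splitting to compare $u_f(x)$ with $u_{f_H}(x')$. The contribution of $[-\pi,2b-\pi)$ reduces to the pointwise inequality $G(x,y)\le G(x',y)$ for $y<2b-\pi\le x'\le x$, immediate from the formula for $G$ (or from $G(\cdot,y)$ being decreasing on $[y,\pi]$). The folded contribution reduces to the pointwise claim, for $y\in[2b-\pi,b]$ (so $y'\in[b,\pi]$),
\[G(x,y)f(y)+G(x,y')f(y')\le G(x',y)\max\{f(y),f(y')\}+G(x',y')\min\{f(y),f(y')\}.\]
Putting $p=f(y)\ge 0$, $q=f(y')\ge 0$ and separating the cases $p\ge q$ and $p\le q$, in each case the inequality has the form $\alpha p+\beta q\ge 0$ with $p,q$ in a prescribed order, and such an inequality holds as soon as the coefficient of the larger of $p,q$ is $\ge 0$ and $\alpha+\beta\ge 0$. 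Here $\alpha+\beta=\bigl(G(x',y)+G(x',y')\bigr)-\bigl(G(x,y)+G(x,y')\bigr)\ge 0$ is exactly inequality $(\ref{5.3})$ of Lemma~\ref{karamata green} applied to the pair $x,y'\in I$, while the other coefficient — $G(x',y)-G(x,y)$ in the first case, $G(x',y)-G(x,y')$ in the second — is checked to be $\ge 0$ by a short computation with the explicit $G$. Integrating and adding the $[-\pi,2b-\pi)$ estimate gives $(\ref{uf uf polwsh polwsh})$.

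For part (c), the same folding applied to $u_{f_H}(x)-u_f(x)=\int_{-\pi}^\pi G(x,y)(f_H-f)(y)\,dy$ yields, for every $x\in[-\pi,\pi]$,
\[u_{f_H}(x)-u_f(x)=\int_{2b-\pi}^{b}\bigl[G(x,y)-G(x,y')\bigr]\bigl(f(y')-f(y)\bigr)^+dy .\]
Inspecting the formula for $G$ shows that for every $x\in[-\pi,b]$ the kernel $G(x,y)-G(x,y')$ is $>0$ for every $y\in[2b-\pi,b)$, which immediately gives $(\ref{uf uf polwsh})$. Since this kernel is strictly positive for a.e.\ $y$, the right-hand side vanishes for one $x\in[-\pi,b]$ if and only if it vanishes for every $x\in[-\pi,b]$, both being equivalent to $\bigl(f(y')-f(y)\bigr)^+=0$ for a.e.\ $y\in[2b-\pi,b]$; this is in turn equivalent to $f=f_H$ a.e.\ on $[-\pi,\pi]$, and $f=f_H$ a.e.\ is equivalent to $u_f\equiv u_{f_H}$ (one direction from $(\ref{anaparastash})$, the other because $u_f\equiv u_{f_H}$ forces $f=-u_f''=-u_{f_H}''=f_H$ a.e.\ by Proposition~\ref{uf}). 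This closes the chain of equivalences in (c).

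The only genuinely delicate point is the sign-checking of the first-order differences of $G$ — $G(x',y)-G(x,y)$, $G(x',y)-G(x,y')$, $G(x,y)-G(x,y')$ — on the subintervals in question; each reduces to an elementary estimate, the recurring ingredients being $c_\alpha\in(0,1/\pi)$, the bound $|z|\le\pi$ on $[-\pi,\pi]$, and, in the borderline configurations, the containment $y\in[2b-\pi,b]$, which forces $b-y\le\pi-b<\pi$ and hence $c_\alpha(b-y)<1$. I expect this bookkeeping, rather than anything conceptual, to be the main obstacle: the structural skeleton — the representation $(\ref{anaparastash})$, the reflection $y\mapsto 2b-y$, and the sum inequalities $(\ref{5.2})$, $(\ref{5.3})$ — makes all three parts go through essentially uniformly.
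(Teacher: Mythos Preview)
Your proof is correct and follows essentially the same skeleton as the paper's: both use the representation $(\ref{anaparastash})$, fold the integral via $y\mapsto 2b-y$, and reduce everything to the Green's function inequalities $(\ref{5.2})$, $(\ref{5.3})$ of Lemma~\ref{karamata green}. The difference is one of packaging. For parts (a) and (c) the paper does not compute the folded integrals by hand: it observes that $(\ref{5.2})$ says $G(x,\cdot)+G(x',\cdot)$ equals its own polarization, and (via Example~\ref{paradeigma}) that $G(x,\cdot)$ equals its own polarization when $x$ is on the ``$0$-side'' of $b$; it then applies the Hardy--Littlewood inequality for polarization (Proposition~\ref{integral of polar}) directly, and for the equality case in (c) reads off the set $A_H$ from that proposition. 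Your argument unpacks exactly what Proposition~\ref{integral of polar} would do, writing out the folded difference explicitly and checking signs of the first-order Green differences $G(x',y)-G(x,y)$, $G(x',y)-G(x,y')$, $G(x,y)-G(x,y')$ by hand.

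What this buys: the paper's route is shorter and cleaner, since the sign-checking is absorbed into Example~\ref{paradeigma} and the abstract polarization inequality; your route is more self-contained (it does not need Proposition~\ref{integral of polar} or Example~\ref{paradeigma} at all) and makes the equality analysis in (c) transparent, but at the cost of the elementary case-by-case computations you flag at the end. I checked those computations and they do go through; the key point each time is $c_\alpha\pi<1$, exactly as you anticipated. For part (b) the two arguments are essentially identical --- the paper's ``combining these facts with Lemma~\ref{karamata green}'' is precisely your two-case $(p\ge q$, $p<q)$ analysis, just stated more tersely.
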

\begin{proof} 
Due to symmetry, we can consider only the case that \(b\in(-\pi,0)\).\\
 (a) Let \(x\in[-\pi,b]\) and \(x'=2b-x\in[b,2b+\pi].\) From \((\ref{5.2})\) we have that \(G(x,\cdot)+G(x',\cdot)\) coincides with its polarization with respect to \(b\). Thus, by Proposition \ref{integral of polar},
    \begin{align*}
    u_f(x)+u_f(x')&=\int_{-\pi}^\pi\bigl(G(x,y)+G(x',y)\bigr)f(y)dy\\
    &\leq\int_{-\pi}^\pi\bigl(G(x,y)+G(x',y)\bigr)_H f_H(y)dy\\
    &=\int_{-\pi}^\pi\bigl(G(x,y)+G(x',y)\bigr)f_H(y)dy=
    u_{f_H}(x)+u_{f_H}(x').
    \end{align*}
 (b) Let \(x\in[-\pi,b]\) and \(x'=2b-x\in[b,2b+\pi].\) By the definition of polarization, \(f\equiv f_H\) on \((2b+\pi,\pi]\). 
    It is quite elementary to show that \(G(x,y)\leq G(x',y)\) for all \(y\in[2b+\pi,\pi]\), and thus\\
    \[\int_{2b+\pi}^\pi G(x,y)f(y)dy\leq\int_{2b+\pi}^\pi G(x',y)f(y)dy=\int_{2b+\pi}^\pi G(x',y)f_H(y)dy.\]
    Directly by the definition of polarization, \(f_H(y)\leq f_H(y')\) for all \(y\in[-\pi,b],\) \(y'=2b-y\) and either
    \[f_H(y)=f(y)\text{ and } f_H(y')=f(y')\]
    or
    \[f_H(y)=f(y')\text{ and } f_H(y')=f(y).\]
    Combining these facts with Lemma \ref{karamata green} and integrating over \([-\pi,b]\) with respect to \(y\) yields 
    \[\int_{-\pi}^b\bigl(G(x,y)f(y)+G(x,y')f(y')\bigr)dy\leq\!\!\int_{-\pi}^b\bigl(G(x',y)f_H(y)+G(x',y')f_H(y')\bigr)dy,\]
    which in turn, after a change of variables, gives
    \[\int_{-\pi}^{2b+\pi} G(x,y)f(y)dy\leq\int_{-\pi}^{2b+\pi} G(x',y)f_H(y)dy.\]
    We have already shown that
    \[\int_{2b+\pi}^\pi G(x,y)f(y)dy\leq\int_{2b+\pi}^\pi G(x',y)f_H(y)dy,\]
    and, by adding the two inequalities together, we get that \(u_f(x)\leq u_{f_H}(x').\)\\
 (c) Let \(x\in[b,\pi]\). By Example \ref{paradeigma}, \(G(x,\cdot)\equiv \bigl(G(x,\cdot)\bigr)_H\). Thus, by Proposition \ref{integral of polar},
\begin{equation}\label{apodeiksh karamata uf}
\begin{aligned}
    u_f(x)&=\int_{-\pi}^\pi G(x,y)f(y)dy\leq\int_{-\pi}^\pi \bigl(G(x,y)\bigr)_H f_H(y)dy\\
    &=\int_{-\pi}^\pi G(x,y)f_H(y)dy=u_{f_H}(x).
\end{aligned}
\end{equation}
    We examine the case of \((\ref{apodeiksh karamata uf})\) holding as an equality in accordance with the condition for equality on Proposition \ref{integral of polar}, namely we compute the corresponding set \(A_H\) in our context and determine when its Lebesgue measure is zero. We do not have to show all the equivalences on Lemma \ref{karamata uf}.(c) since most of them have already been established to be true. It suffices to show that if \((\ref{apodeiksh karamata uf})\) holds as an equality for one \(x\in[b,\pi]\), then \(f=f_H\) a.e. on \([-\pi,\pi]\). After some simplifications, namely that \(f\) and \(G(x,\cdot)\) are identically zero outside of \([-\pi,\pi]\) and that \(G(x,\cdot)\) is its own polarization with respect to \(b\), we deduce that \(A_H=\{y\in(b,2b+\pi]: f(y)<f(y')\}\), which has zero Lebesgue measure if and only if \(f(y)\geq f(y')\) almost everywhere on \((b,2b+\pi]\), i.e. if and only if \(f\) and \(f_H\) coincide almost everywhere on \([-\pi,\pi]\).
\end{proof}
We now apply the Karamata inequality (Lemma \ref{karamata}) for \(n=2\) on values of solutions to the Robin problem.
\begin{proposition}\label{karamata phi}
Let \(\alpha>0\), \(0\leq f\in L^1[-\pi,\pi]\) and \(u_f\) be the solution to the corresponding Robin problem. Let \(b\in(-\pi,0)\cup(0,\pi)\), let \(I\) be as defined in Lemma \(\mathrm{\ref{karamata green}}\). Let \(f_H\) be the polarization of \(f\) w.r.t. \(b\), and let \(u_{f_H}\) be the solution to the Robin problem with heat source \(f_H\). Let \(\phi:\mathbb{R}\to\mathbb{R}\) be a convex and increasing function. Then
\begin{enumerate}
    \item[(a)] If \(x\in I\), let \(x'=2b-x\). Then
    \begin{equation}\label{karamata ineq phi}
    \phi\bigl(u_f(x)\bigr)+\phi\bigl(u_f(x')\bigr)\leq \phi\bigl(u_{f_H}(x)\bigr)+\phi\bigl(u_{f_H}(x')\bigr).
    \end{equation}
    \item[(b)] If \(b>0\) (respectively \(b<0\)), let \(x\in[-\pi,b]\) (respectively \(x\in [b,\pi]\)). Then 
    \begin{equation}\label{monotonia phi}
    \phi\bigl(u_f(x)\bigr)\leq \phi\bigl(u_{f_H}(x)\bigr).
    \end{equation}
    If \(\phi\) is strictly increasing and \(b>0\) (resp. \(b<0\)), then \((\ref{monotonia phi})\) holds as an equality for some \(x\in[-\pi,b]\) (resp. \(x\in [b,\pi]\)) if and only if it holds as an equality for all \(x\in[-\pi,b]\) (resp. \(x\in [b,\pi]\)) if and only if \(f=f_H\) almost everywhere on \([-\pi,\pi]\).
\end{enumerate}
\end{proposition}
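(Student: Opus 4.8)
The plan is to derive both parts from Lemma~\ref{karamata uf} combined with Karamata's inequality (Lemma~\ref{karamata}) in the case $n=2$. As in the proof of Lemma~\ref{karamata uf}, the reflection symmetry $x\mapsto -x$ of the Robin problem (which turns the polarization w.r.t. $b$ into the polarization w.r.t. $-b$) lets me assume $b\in(-\pi,0)$, so that $I=[-\pi,b]$; for $x\in I$ put $x'=2b-x\in[b,2b+\pi]\subseteq[b,\pi]$.

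For part (a), I would apply Lemma~\ref{karamata} to the two $2$-tuples obtained by arranging $\{u_f(x),u_f(x')\}$ and $\{u_{f_H}(x),u_{f_H}(x')\}$ in non-increasing order, say $(x_1,x_2)$ and $(y_1,y_2)$. Hypothesis (a) of Lemma~\ref{karamata} holds by construction, and hypothesis (c) reduces (as $n=2$) to the single case $k=2$, which is exactly~\eqref{uf syn uf} of Lemma~\ref{karamata uf}(a). The one point that needs a short argument is hypothesis (b), i.e. $x_1\le y_1$, which reads $\max\{u_f(x),u_f(x')\}\le\max\{u_{f_H}(x),u_{f_H}(x')\}$: here I would combine~\eqref{uf uf polwsh polwsh} of Lemma~\ref{karamata uf}(b), giving $u_f(x)\le u_{f_H}(x')$, with~\eqref{uf uf polwsh} of Lemma~\ref{karamata uf}(c) applied at the point $x'\in[b,\pi]$, giving $u_f(x')\le u_{f_H}(x')$; together these yield $\max\{u_f(x),u_f(x')\}\le u_{f_H}(x')\le\max\{u_{f_H}(x),u_{f_H}(x')\}=y_1$. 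Since all finite reals lie in an interval on which $\phi$ is convex and increasing, Lemma~\ref{karamata} then gives $\phi(x_1)+\phi(x_2)\le\phi(y_1)+\phi(y_2)$, and a two-term sum of $\phi$-values is insensitive to reordering its arguments, so this is precisely~\eqref{karamata ineq phi}.

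For part (b), inequality~\eqref{monotonia phi} is immediate: Lemma~\ref{karamata uf}(c) gives $u_f(x)\le u_{f_H}(x)$, and applying the increasing map $\phi$ preserves it. For the equality clause, if $\phi$ is strictly increasing then $\phi(u_f(x))=\phi(u_{f_H}(x))$ holds iff $u_f(x)=u_{f_H}(x)$, so~\eqref{monotonia phi} is an equality for a given $x$ exactly when~\eqref{uf uf polwsh} is; the chain of equivalences already established in Lemma~\ref{karamata uf}(c) (equality at one point $\Leftrightarrow$ equality at every point $\Leftrightarrow$ $f=f_H$ a.e. $\Leftrightarrow$ $u_f\equiv u_{f_H}$) then transfers verbatim.

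I do not expect a genuine obstacle; the only thing requiring care is the bookkeeping in checking hypothesis (b) of Karamata, since $x$ and $x'$ sit in the two different sub-intervals $[-\pi,b]$ and $[b,\pi]$ and one must invoke the appropriate clause of Lemma~\ref{karamata uf} (part (b) for $x$, part (c) for $x'$), and the reduction to $b<0$ should be stated explicitly because the definitions of $f_H$ and of $I$ are only symmetric in $b$ after the reflection of the interval.
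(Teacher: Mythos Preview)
Your proposal is correct and follows the same approach as the paper: reduce by symmetry to $b<0$, apply Karamata's inequality with $n=2$ to the pairs $\{u_f(x),u_f(x')\}$ and $\{u_{f_H}(x),u_{f_H}(x')\}$ via Lemma~\ref{karamata uf}, and derive part~(b) directly from Lemma~\ref{karamata uf}(c) and the monotonicity of~$\phi$.

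One point worth noting: your verification of hypothesis~(b) of Lemma~\ref{karamata} (that $\max\{u_f(x),u_f(x')\}\le\max\{u_{f_H}(x),u_{f_H}(x')\}$, argued by combining Lemma~\ref{karamata uf}(b) at $x$ with Lemma~\ref{karamata uf}(c) at $x'$) is more explicit than the paper's own proof, which cites only Lemma~\ref{karamata uf}(a) for the sum inequality and does not spell out this step. Your treatment is the more careful one here. Conversely, the paper spends a sentence pinning down a common bounded interval $(0,u_{f^\#}(0)]$ containing all relevant values (via Proposition~\ref{idiotitesuf} and Theorem~\ref{max of uf}); since $\phi$ is assumed convex and increasing on all of $\mathbb{R}$, your one-line remark that any finite values lie in such an interval is equally sufficient.
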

\begin{proof}
(a) Use Lemma \ref{karamata uf}.(a) to apply the Karamata inequality for \(n=2\), \(\{x_1,x_2\}=\{u_f(x),u_f(x')\}\) and \(\{y_1,y_2\}=\{u_{f_H}(x),u_{f_H}(x')\}\). By Proposition \ref{idiotitesuf} and Theorem \ref{max of uf}, all values of \(u_f\), \(u_{f_H}\) and \(u_{f^\#}\) belong to the interval \((0,u_{f^\#}(0)]\). So, we use Lemma \ref{karamata} and we get the desired inequality for all functions \(\phi\) that are convex on intervals \((0,M)\), for any real number \(M>u_{f^\#}(0)\). But, quite trivially, this class of functions contains the real functions that are convex on the whole real line.\\
(b) The inequality \((\ref{monotonia phi})\) is a direct consequence  of Lemma \ref{karamata uf}.(c) and the monotonicity of \(\phi\). Assuming that \(\phi\) is strictly increasing we have equality here if and only if equality holds on \((\ref{uf uf polwsh})\).
\end{proof}
\justify
\textbf{Proof of Theorem \ref{polarconvex}.}
\begin{proof}
Due to symmetry, we may assume that \(b>0\).\\
Integrating both sides of the inequality \((\ref{karamata ineq phi})\) from \(b\) to \(\pi\), and with a change of variables, we immediately get that 
\begin{equation}\label{apodeiksi 1.1 prwth}
\int_{2b-\pi}^\pi \phi\bigl(u_f(x)\bigr)dx\leq\int_{2b-\pi}^\pi \phi\bigl(u_{f_H}(x)\bigr)dx.
\end{equation}
Even more simply, by \((\ref{monotonia phi})\), we conclude that
\begin{equation}\label{apodeiksi 1.1 deuterh}
\int_{-\pi}^{2b-\pi} \phi\bigl(u_f(x)\bigr)dx\leq\int_{-\pi}^{2b-\pi} \phi\bigl(u_{f_H}(x)\bigr)dx.
\end{equation}
Adding the inequalities \((\ref{apodeiksi 1.1 prwth})\) and \((\ref{apodeiksi 1.1 deuterh})\) we get \((\ref{ineq polar})\).\\
Assume that \(\phi\) is strictly increasing. In order for \((\ref{ineq polar})\) to hold as an equality, \((\ref{apodeiksi 1.1 deuterh})\) has to hold as an equality. If \(f\equiv f_H\), then clearly all \((\ref{apodeiksi 1.1 prwth})\), \((\ref{apodeiksi 1.1 deuterh})\) and \((\ref{ineq polar})\) hold as equalities.\\
Conversely, assume that \(f\) is not identical to \(f_H\) a.e. on \([-\pi,\pi].\) Then, by Proposition \ref{karamata phi}.(b), \(\phi\bigl(u_f(x)\bigr)< \phi\bigl(u_{f_H}(x)\bigr)\) for all \(x\in[-\pi,2b-\pi]\), and thus the inequality \((\ref{apodeiksi 1.1 deuterh})\) is also strict, rendering the inequality \((\ref{ineq polar})\) strict as well.
\end{proof}
The following lemma is a special case of Theorem 6.1 of \cite{Brock}.
\begin{lemma}\label{seq of polar to sdr}
Let \(0\leq f\in L^1[-\pi,\pi]\), let \(f^\#\) denote the symmetric decreasing rearrangement of \(f\). There exists a sequence of non-zero numbers \(\{b_n\}_{n=1}^\infty\subset(-\pi,\pi)\) such that
\[f_n\longrightarrow f^\#\quad\textit{ in }\:\:L^1[-\pi,\pi],\]
where \(f_1\) is the polarization of \(f\) with respect to \(b_1\) and \(f_{n+1}\) is the polarization of \(f_n\) w.r.t. \(b_{n+1}\), for \(n=1,2,\dots.\)
\vspace{0mm}\\
\noindent
Let \(u_{f_{n}}\) and \(u_{f^\#}\) be the solutions to the corresponding Robin problems for \(f_{n}\) and \(f^\#\) respectively. Then \(u_{f_{n}}\longrightarrow u_{f^\#}\) uniformly on \([-\pi,\pi]\).
\end{lemma}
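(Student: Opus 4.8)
The plan is to handle the two assertions separately, the first by citation and the second by an elementary continuity estimate.

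For the first assertion — the existence of a sequence of centers \(\{b_n\}\subset(-\pi,\pi)\setminus\{0\}\) for which the iterated polarizations \(f_n\) converge to \(f^\#\) in \(L^1[-\pi,\pi]\) — I would simply invoke Theorem 6.1 of \cite{Brock}, of which this is the one-dimensional instance: the only things to check are that the ``toward zero'' polarizations of Definition \ref{polar} are exactly the polarizations with respect to half-lines containing the fixed point \(0\) that Brock's theorem employs, and that \(0\le f\in L^1[-\pi,\pi]\) satisfies its hypotheses. Before proceeding I would also record, using the Remark following Definition \ref{polar}, that each \(f_n\) is again a rearrangement of \(f\); in particular \(0\le f_n\in L^1[-\pi,\pi]\), so the solution \(u_{f_n}\) is well defined by Propositions \ref{uf} and \ref{green} and admits the Green representation.

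For the second assertion I would use only the representation \(u_g(x)=\int_{-\pi}^\pi G(x,y)g(y)\,dy\) of Proposition \ref{green} together with the fact that \(G\) is continuous on the compact square \([-\pi,\pi]^2\), hence \(M:=\max_{[-\pi,\pi]^2}G<\infty\). Subtracting the representations for \(f_n\) and \(f^\#\), for every \(x\in[-\pi,\pi]\) we get
\[
\bigl|u_{f_n}(x)-u_{f^\#}(x)\bigr|
=\Bigl|\int_{-\pi}^{\pi}G(x,y)\bigl(f_n(y)-f^\#(y)\bigr)\,dy\Bigr|
\le M\,\|f_n-f^\#\|_{L^1[-\pi,\pi]}.
\]
The right-hand side is independent of \(x\) and tends to \(0\) by the first part, so \(\|u_{f_n}-u_{f^\#}\|_{L^\infty}\to 0\), which is precisely uniform convergence on \([-\pi,\pi]\).

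I do not expect a genuine obstacle here: the lemma reduces to the cited \(L^1\)-approximation theorem plus the observation that \(g\mapsto u_g\) is a bounded linear map from \(L^1[-\pi,\pi]\) into \(C[-\pi,\pi]\), which transports \(L^1\) convergence of the data to uniform convergence of the solutions. The only points demanding a line of care are (i) confirming that the \(f_n\) furnished by \cite{Brock} are polarizations in the sense of Definition \ref{polar}, with centers in \((-\pi,0)\cup(0,\pi)\), so that the statement is meaningful and the Green representation applies; and (ii) noting that mere boundedness of \(G\) — rather than any finer regularity — already suffices for the estimate above.
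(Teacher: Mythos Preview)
Your proposal is correct and matches the paper's own proof essentially line for line: the first assertion is handled by a direct citation of Theorem~6.1 of \cite{Brock}, and the second by the Green representation together with the bound \(|u_{f_n}(x)-u_{f^\#}(x)|\le \bigl(\max_{[-\pi,\pi]^2}G\bigr)\,\|f_n-f^\#\|_{L^1}\). The only cosmetic difference is that the paper identifies the constant as \(G(0,0)\), which is precisely your \(M\).
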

\begin{proof}
In our context, the existence of a sequence \(\{f_n\}_{n=1}^\infty\) of iterated polarizations, as described in the statement of Lemma \ref{seq of polar to sdr}, such that \(f_n\longrightarrow f^\#\) in \(L^1[-\pi,\pi]\), is directly guaranteed by Theorem 6.1 of \cite{Brock}.\\
Finally, for all \(x\in[-\pi,\pi]\), we have
\begin{align*}
\bigl|u_{f_{n}}(x)-u_{f^\#}(x)\bigr|&=\left|\int_{-\pi}^\pi G(x,y)\bigl(f_n(y)-f^\#(y)\bigr)dy\right|\\
&\leq \max_{x,y\in[-\pi,\pi]}G(x,y)\int_{-\pi}^\pi |f_n(y)-f^\#(y)|dy\\
&=G(0,0)||f_n-f^\#||_{L^1}\xrightarrow{n\to\infty}0.
\end{align*}
\end{proof}
\justify
\textbf{Proof of Theorem \ref{sdrconvex}}
\begin{proof}
Let \(\{f_n\}_{n=1}^\infty\) be as in Lemma \ref{seq of polar to sdr}. Then, by Theorem \(\ref{polarconvex}\),
\begin{align*}
\int_{-\pi}^\pi \phi\bigl(u_f(x)\bigr)dx&\leq\int_{-\pi}^\pi \phi\bigl(u_{f_1}(x)\bigr)dx\leq\int_{-\pi}^\pi \phi\bigl(u_{f_2}(x)\bigr)dx\leq\dots\\
&\leq\int_{-\pi}^\pi \phi\bigl(u_{f_n}(x)\bigr)dx\leq\dots\:,
\end{align*}
meaning that the sequence of the integrals \(\displaystyle\int_{-\pi}^\pi \phi\bigl(u_{f_n}(x)\bigr)dx\) is increasing in \(n\).\\
By the uniform convergence of \(\{u_{f_n}\}_{n=1}^\infty\) to \(u_{f^\#}\) (Lemma \ref{seq of polar to sdr}),
\[\lim_{n\to\infty}\int_{-\pi}^\pi \phi\bigl(u_{f_n}(x)\bigr)dx=\int_{-\pi}^\pi \phi\bigl(u_{f^\#}(x)\bigr)dx,\]
which proves the inequality \((\ref{ineq sdr}).\)
If \(f=f^\#\), then \((\ref{ineq sdr})\) is a trivial equality.\\
Conversely, assume that \(\phi\) is strictly increasing and that \(f\) is not equal to its symmetric decreasing rearrangement a.e. on \([-\pi,\pi].\) Then, by Lemma \(\ref{polar of non-sdr}\), there exists some polarization (towards zero) \(f_H\) of \(f\) that is not a.e. equal to \(f\). This in turn, by Theorem \ref{polarconvex}, means that 
\[\int_{-\pi}^\pi \phi\bigl(u_f(x)\bigr)dx<\int_{-\pi}^\pi \phi\bigl(u_{f_H}(x)\bigr)dx.\]
Applying the first part of Theorem \ref{sdrconvex}, which we have already proven, to \(f_H\), we get that
\[\int_{-\pi}^\pi \phi\bigl(u_{f_H}(x)\bigr)dx\leq\int_{-\pi}^\pi \phi\bigl(u_{(f_H)^\#}(x)\bigr)dx.\]
But \(f\) and \(f_H\) are rearrangements of each other, so they have the same s.d.r. \(f^\#\), and thus
\[\int_{-\pi}^\pi \phi\bigl(u_f(x)\bigr)dx<\int_{-\pi}^\pi \phi\bigl(u_{f_H}(x)\bigr)dx\leq\int_{-\pi}^\pi \phi\bigl(u_{f^\#}(x)\bigr)dx.\]
\end{proof}
\section*{Acknowledgments}
I would like to thank Professor D. Betsakos, my advisor, for his advice during the preparation of this work.
\bibliography{bibliography}
\bibliographystyle{plain}
\nocite{*}
\end{document}